\titleformat{\section}[hang]{\normalfont\scshape}{\thesection.}{.5em}{\filcenter}[]
\titleformat{\subsection}[hang]{\normalfont\scshape}{\thesubsection.}{.5em}{\filcenter}[]
\titleformat{\subsubsection}[hang]{\normalfont\scshape}{\thesubsubsection.}{.5em}{\filcenter}[]
\numberwithin{equation}{section}
\newtheorem{thm}{Theorem}[section]
\newtheorem{corollary}{Corollary}[section]
\newtheorem{prop}{Proposition}[section]
\theoremstyle{definition}
\newtheorem{remark}{Remark}[section]
\titleformat{\subsubsection}[runin]{\normalfont\bfseries}{\thesubsubsection}{1em}{}
\newcommand\restr[2]{{% we make the whole thing an ordinary symbol
  \left.\kern-\nulldelimiterspace % automatically resize the bar with \right
  #1 % the function
  \vphantom{\big|} % pretend it's a little taller at normal size
  \right|_{#2} % this is the delimiter
  }}
\def\A{\mathcal{A}}
\begin{document}
\baselineskip 1.5em

\title{Well-posedness and Bilinear Controllability  of a Repairable  System with Degraded State}
% \author{ Daniel Owusu Adu and Weiwei Hu }

  \author{Daniel Owusu Adu \thanks{Department of Mathematics, University of Georgia, Athens, GA 30602, USA  (e-mail: daniel.adu@uga.edu) }
  \and  Weiwei Hu  \thanks{Department of Mathematics, University of Georgia, Athens, GA 30602, USA  (e-mail: Weiwei.Hu@uga.edu)}
    }

\maketitle

\begin{abstract}
%% Text of abstract
In this work, we consider the dynamics of  repairable systems characterized by three distinct states: one signifying normal operational states, another representing degraded conditions and a third denoting failed conditions. These systems are characterized by their ability to  be repaired  when failures and/or degradation occur. Typically described by transport equations,  these systems exhibit a coupled nature, interlinked through integro-differential equations and integral boundary conditions that dictate the transitions among all the states. In this paper,  we address two less-explored facets: 1) the well-posedness and the asymptotic behavior of such systems with maximum repair time being finite; and 2) the bilinear controllability of the system via repair actions. In particular, we focus on the case where only one degraded and one failed states exist. We first discuss  part 1) for given time-independent  repair rates and then  design  the space-time dependent repair strategies that can manipulate system dynamics to achieve the desired level over a finite horizon. Our objective is to enhance the system availability- the probability of being operational when needed over a fixed period of time. We present rigorous analysis and develop control strategies that leverage the bilinear structure of the system model.
\end{abstract}

%, demonstrating that appropriate control actions can manipulate system dynamics to achieve desired availability levels over a finite horizon.

%\begin{keyword}
%% keywords here, in the form: keyword \sep keyword
%% PACS codes here, in the form: \PACS code \sep code
%% MSC codes here, in the form: \MSC code \sep code
%% or \MSC[2008] code \sep code (2000 is the default)
%Reparable system,  nonreflexive Banach space, bilinear control,

%\end{keyword}

%\end{frontmatter}

%% \linenumbers

%% main text

\section{Introduction}
In real life practice, systems often encounter challenges stemming from failures or degradation. These issues are prevalent across various applications such as product design, inventory systems, computer networking, electrical power systems, and complex manufacturing processes. All these systems are susceptible to degradation or failure but are able to be  restored to satisfactory operation through repair actions (e.g.,~\cite{Sandler, jardine2013maintenance, moubray1997reliability, AJ-AT:13, JM:97}). Repairable systems are capable of  undergoing repair/maintenance actions
 when failures and/or degradation occur.   Our current work  focuses on a  three-state repairable system, characterized by its  transitions among three states:  the functioning (good), the degraded, and the failed ones.  The digram of the model is presented in Fig.\ref{transition}.
 \begin{figure}
\centering
\resizebox{0.5\textwidth}{!}{
        \begin{tikzpicture}[node distance=4cm, auto]

% Define nodes for states
    \node[circle, draw=black, fill=white] (state1) {0};
    \node[circle, draw=black, fill=white, right of=state1] (state2) {1};
    \node[circle, draw=black, fill=white, right of=state2] (state3) {2};

    % Connect the states
    \draw[->] (state1) -- node[midway, above] {$\lambda_1$} (state2);
    \draw[->] (state2) -- node[midway, above] {$\lambda_{2}$} (state3);
    \draw[->, bend left] (state1) to node[midway, above] {$\lambda_{2}$} (state3);
    \draw[->, bend left] (state2) to node[midway, above] {$\mu_1$} (state1);
    \draw[->, bend left] (state3) to node[midway, below] {$\mu_2$} (state1);

  \end{tikzpicture}
}
\caption{Transition digram of a three-state repairable system. Here $0,1,2$ denote good, degraded and failed states. Good state can degrade or fail with rates $\lambda_1$ and $\lambda_{2}$, respectively. Degraded and failed states can be repaired at rates $\mu_1$ and $\mu_2$, respectively. The degraded state can also fail with rate 
$\lambda_{2}$.}
\label{transition}
\end{figure}
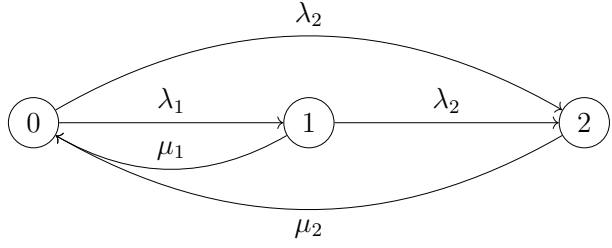

The mathematical model considered here is  described by transport equations, collectively interwoven through an integro-differential equation. Moreover, an integral boundary condition is prescribed to the transport equation which  is a crucial element that governs the transitions of the states.
 This type of mathematical models for repairable systems was derived using Markov chain and supplementary variable techniques (e.g.,~\cite{chung1981reparable, 
 cox1955analysis,  gupta1984cost, gupur2011functional}).   Specifically, we consider the 
 mathematical model proposed by  Gupta and Agarwal in \cite{gupta1984cost}
 \begin{eqnarray}
\left\{\begin{array}{l}
\displaystyle\dot{p}_0(t) =-\sum^{2}_{i=1}\lambda _i  p_0(t)+\sum^2_{i=1} \int_0^{L} \mu_i(x)p_i(x,t) dx  \\
\displaystyle\frac{\partial{p_1(x,t)}}{\partial{t}} + \frac{\partial{p_1(x,t)}}{\partial{x}} =-(\mu_1(x)+\lambda_{2})p_{1}(x,t)\\
\displaystyle\frac{\partial{p_2(x,t)}}{\partial{t}} + \frac{\partial{p_2(x,t)}}{\partial{x}} =-\mu_2(x)p_{2}(x,t)
\end{array}\right. \label{system}
\end{eqnarray}
%\begin{align}
%&\frac{d p_{0}(t)}{d t}=-(\lambda_{1}+\lambda_{2}) p_{0}(t) + \int^{L}_{0} \mu_{1}(x)p_{1}(x,t) \,d x+ \int^{L}_{0} \mu_2(x)p_{2}(x,t) \,d x,    \\
%&\frac{\partial{p_{1}(x,t)}}{\partial{t}} + \frac{\partial{p_{1}(x,t)}}{\partial{x}} =-(\mu_{1}(x)+\lambda_{2})p_1(x,t) , \\
%&\frac{\partial{p_2(x,t)}}{\partial{t}} + \frac{\partial{p_2(x,t)}}{\partial{x}} =-\mu_2(x)p_2(x,t), 
%\end{align}
with  boundary conditions 
\begin{align}
p_{1}(0,t)&=\lambda_{1} p_0(t), \label{sys_BC_D}\\
 p_2(0,t)&=\lambda_{2} p_0(t)+\lambda_{2}\int^{L}_{0} p_{1}(x, t)\, dx, 
\label{sys_BC_F}
\end{align}
and initial conditions
\begin{equation}
p_0(0)=\phi_0\geq0, \quad p_{1}(x, 0)=\phi_1(x)\geq0, \quad p_2(x, 0)=\phi_2(x)\geq0.
\label{sys_IC}
\end{equation} 
%where  $\phi_G, \phi_D(x), \phi_F(x)\geq0$ for $x\in(0, L)$, and 
%\begin{align}
%\phi_G+\int^L_0\phi_D(x)d x+\int^L_0\phi_F(x)d x=1.\label{prop_ini}
%\end{align}
Here 
\begin{enumerate}[1)]
\item 
$\lambda_{1}>0$ and  $\lambda_{2}>0$ represent the failure  rates of the system from the good mode to  the degraded and  to  the  failed states, respectively  (see Fig.~\ref{transition}). It is assume that the system has the same failure rate  $\lambda_{2}$ from the degraded  to   the  failed state. 
%\item  $\lambda_{1}\geq0$ represents the failure  rate of the system from the degraded mode to   the  failed state;
%\item  $x\in[0, L]$ represents the  time since repair begins, where $L>0$ is the maximum repair time; 
\item $\mu_{1}(x)\geq 0$ and  $\mu_2(x)\geq 0$ represent the
 repair  rates of system in  the degraded and the failed states with an elapsed repair time $x\in[0, L]$ for $0<L<\infty$, respectively. Assume that 
 \begin{align}
&\int^l_0\mu_{i}(x)\,dx<\infty, \quad\text{ for}\quad  0<l<L, \label{mu1}\\
\text{and} \quad &\int^L_0\mu_{i}(x)\,dx=\infty, \quad i=1,2. \label{mu2}
\end{align}
We further assume that there exists some positive integer  $N\in\mathbb{N}^+$ such that 
 \begin{align}
\lim_{x\to L}\mu_i(x)(L-x)^N<\infty, \quad i=1,2. \label{mu3}
\end{align}
 \item  $p_0(t)$ represents the probability of the system in good state at time $t$;
 
 \item  $p_{1}(x, t)$ and $p_2(x, t)$ represent the probability density distributions  of the system in the degraded and  the failed  states, respectively, at time $t$  with an elapsed repair time $x$; 
 %\item $p_2(x, t)$ represents the probability density distribution  of the system in the failed state  at time $t$, which is under repair with an elapsed repair time  $x$.
\item Let $\hat{p}_{1}(t)$  and $\hat{p}_0(t)$ denote the probabilities of the system in the degraded and the failed states at time $t$, respectively. We have
   \begin{align}
            \hat{p}_{1}(t)=\int^{L}_0p_{1}(x,t)\, d x\quad \text{and} \quad    \hat{p}_2(t)=\int^{L}_0p_2(x,t)\, d x. \label{prob_p0}
    \end{align}
\item
 The initial probability distributions of the system  satisfy 
\begin{align}
 \phi_{0}+\sum^2_{i=1}\int^{L}_0\phi_{i}(x)\, dx=1. \label{SUM_ini}
 \end{align}
 \end{enumerate}
  {\it Assumptions  associated with the system.} (1) The failure and degradation rates are constant.
(2) All failures are statistically independent.
(3) The system can fail in the degraded state and
the failure rate is the same regardless of whether the
system is good or degraded.
(4) The repair time for the degraded or the failed device  is arbitrarily distributed.
(5) The repair process begins soon after the device is in failure state.
(6) The system has only one repair facility and
repair is to the same quality as new. Repair never
damages anything.
%(6) The repaired device is as good as new.
 
%(7) No further failure can occur when the device has been down.

In \cite{gupta1984cost},  Laplace transport was used to solve system \eqref{system}--\eqref{sys_IC} and its steady-state  without discussing  the well-posedness and the stability of the system model.  Recently in~\cite{WH-TA-XB-ZQ: 23}, Hu~{\it et\,al.} discussed the degradation and failure rates identification of this system, however, the well-posedness issues were not in their scope. 
In the current work, we will first rigorously address these issues  using $C_0$-semigroup theory with  repair rates being given and time-independent. Since repair actions play an essential role in ensuring the system performance, it is natural to employ them as control inputs for the system. Furthermore, due to the bilinear structure between  the repair rates and the system states. This immediately  leads to a bilinear control problem.  While the optimal bilinear open-loop control design of similar  systems  has been considered in \cite{boardman2019optimal, WH:22}, the  bilinear controllability keeps open.   Our  main objective of this  work  is  to establish  this result  of the system via repair actions, where we assume that 
repair actions are allowed to be time-dependent  and we propose explicit feedback laws for the actions  to steer the system behavior to the desired  one at a given final time. 
 
The rest of this paper is organized as follows. In Section \ref{sec_wellposedness}, we first establish  the well-posedness of system~\eqref{system}--\eqref{sys_IC}
by showing that the system operator generates a  positive $C_0$-semigroup of contraction. Moreover, it can be shown that zero is a simple eigenvalue of the generator and the only spectrum on the imaginary axis. Lastly, by showing the eventual compactness of the semigroup, we  are able to obtain the exponential  convergence of the time-dependent solution of the system to its steady-state. In Section \ref{sec_control}, we employ  the repair actions as the system control inputs and  construct explicit space-time dependent repair rates in  feedback forms. The closed-loop system shares similar attributes of the open-loop. Finally, we establish the bilinear controllability of the system by making use of its exponential stability and conclude our work  in Section \ref{conclusion}.  

\section{Well-posedness and Asymptotic Behavior  of the System}\label{sec_wellposedness}

In this section, we address the well-posedness of system~\eqref{system}--\eqref{sys_IC} and its asymptotic  stability  using $C_0$-semigroup theory.
 
 Let $X = \mathbb{R}\times L^1(0,L)\times L^1(0, L)$ be equipped with the norm
$
\|\cdot\|_{X} := |\cdot | +\sum^2_{i=1}\|\cdot\|_{L^1(0, L)}
$. 
It is clear that the Banach space structure $(X, \|\cdot\|)$ is compatible with the
order structure $(X,\leq)$, that is,  $|f|\leq |g|$ for $f, g \in X$ implies $f\leq g$, thus
 $X$ is a Banach lattice.
Let $ \vec{p}(t)=(p_0(t), p_1(\cdot,t), p_2(\cdot,t))^{\mathrm{T}}$. Then  system equations \eqref{system}--\eqref{sys_IC} can be written
 as an abstract Cauchy problem in $X$:
\begin{align}
\left\{\begin{array}{ll}
\dot{\vec{p}}(t)=\mathcal{A}\vec{p} (t),\\
\vec{p}_0=(\phi_0,\phi_1,\phi_2)^T,
\end{array}
\right. \label{IVP}
\end{align}
where the system operator $\mathcal{A}\colon D(\mathcal{A})\subset X \to X$ is defined as
%\begin{equation}\label{def_A}
%\mathcal{A}\vec{p} = \begin{pmatrix}
%    -\sum^{2}_{i=1}\lambda_i p_0 + \sum_{i=1}^2\int_0^{L} \mu_i(x)p_i(x) \,dx \\
%    -(\frac{d}{dx} +\mu_{1}(x) + \lambda_{2})p_1(x) \\
%    -(\frac{d}{dx} +\mu_2(x))p_{2}(x)
%\end{pmatrix},
%\end{equation}
\begin{align}
\mathcal{A}= \displaystyle 
\begin{pmatrix}
   -\sum^{2}_{i=1}\lambda_i &  \int^L_0\mu_1\cdot \,dx&\int^L_0\mu_2\cdot \,dx \\ 
0  & -(\frac{d}{dx} +\mu_{1}(x) + \lambda_{2}) &0\\
0&0&    -(\frac{d}{dx} +\mu_2(x))
\end{pmatrix},
 \label{def_A}
\end{align}
with domain
\begin{align}\label{D_A}
\mathcal{D}(\mathcal{A}) = \bigg\{\vec{p}=&(p_0, p_1(\cdot), p_2(\cdot))^T\in X\colon p_i\in W^{1,1}(0, L),  \int^L_0 \mu_ip_i\,dx<\infty,\quad i=1,2,\nonumber\\
&\quad\text{and}\ \begin{pmatrix}p_1(0)\  p_2(0)\end{pmatrix}^{T} = \Gamma_1 p_0(x) + \int_0^L\Gamma_2 p_1(x)dx\bigg\},
\end{align} 
where
\begin{align}
\Gamma_1 = \begin{pmatrix}
    \lambda_1 \\
    \lambda_{2}
\end{pmatrix}\quad\text{and}\quad \Gamma_2 = \begin{pmatrix}
    0 \\
     \lambda_{2}
\end{pmatrix}.
\label{oper_BC}
\end{align}

The well-posedness of  the repairable systems governed by the coupled transport and differential-integral equations has been well-studied in literature  using $C_0$-semigroup theory, where the majority focuses on the case  that  the maximum repair time $L=\infty$ (e.g.~\cite{WH-HX-JY-GTZ:07,WH:07,WH:22, gao2022stability,   hu2007exponential, Guo-1,gupur2011functional, Gupur-1,  HG-1, HR-1, HR-2,  xu2005asymptotic, HXYZ-1}). However, in real life applications it  is more realistic to assume $L<\infty$, as no system can be under repair forever. In this case,  the repair rates $\mu_i, i=1,2$ would have a singularity at $x=L$, as shown in the assumptions \eqref{mu1}--\eqref{mu2}. This leads to some different system properties compared to the situation  when $L=\infty$ (e.g.,~\cite{WH:16}). In this work, we consider that $L$ is  finite and  provide a complete proof of the well-posedness of the system \eqref{IVP}   for the convenience of the reader.  

Let $\sigma(\mathcal{A})$ and $\rho(\mathcal{A})$ denote  the spectrum and the resolvent set of $\mathcal{A}$,  respectively.  To start with, we first show that the  right open half-plane is contained in the  resolvent set of $\mathcal{A}$.
 %The proof is a bit long but elementary so we present it in Appendix.

\begin{prop}\label{prop1}
For  the operator $\mathcal{A}$ with its domain $D(\mathcal{A})$ defined in \eqref{def_A}--\eqref{D_A}, we have the following statements hold:
\begin{enumerate}[(1)]
\item $\rho(\mathcal{A})$ contains the set
\begin{equation}\label{eq: spectrum set}
\Psi=\{r\in\mathbb{C} : \mathrm{Re}(r)>0 \quad\text{and}\quad r=ia,\quad a\neq 0\};
\end{equation}
\item the resolvent operator $\mathcal{R}(r, \mathcal{A})$ is compact for $r\in \rho(\mathcal{A})$;
\item  zero is a simple eigenvalue of $\mathcal{A}$ and the only real eigenvalue. 
\end{enumerate}
\end{prop}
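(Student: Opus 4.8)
The plan is to obtain all three items from a single explicit inversion of the resolvent. Fix $r$ and write $(rI-\mathcal{A})\vec p=\vec y$ componentwise. The second and third rows are the first-order linear ODEs $p_1'+(r+\mu_1+\lambda_2)p_1=y_1$ and $p_2'+(r+\mu_2)p_2=y_2$, which I solve by integrating factors. Writing $M_i(x)=\int_0^x\mu_i(s)\,ds$, each solution is determined by its boundary value $p_i(0)$, and these are fixed through the domain conditions in \eqref{D_A}: $p_1(0)=\lambda_1 p_0$ and $p_2(0)=\lambda_2 p_0+\lambda_2\int_0^L p_1\,dx$. Substituting the explicit $p_1,p_2$ into the first row then collapses the whole system to a single scalar equation $c(r)\,p_0=(\text{known data from }\vec y)$, so that existence and uniqueness hinge entirely on the scalar factor $c(r)$.

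The computation of $c(r)$ is the heart of the argument. The key identity is $\mu_i e^{-M_i}=-\frac{d}{dx}e^{-M_i}$, together with $e^{-M_i(0)}=1$ and, crucially, $e^{-M_i(L)}=0$, the latter being exactly assumption \eqref{mu2}. Setting $d_i(r)=\int_0^L e^{-(r+\cdots)x-M_i(x)}\,dx$ and integrating by parts (the boundary term at $x=L$ vanishes because $e^{-(r+\cdots)x}$ is bounded on the compact interval $[0,L]$ while $e^{-M_i(L)}=0$), I expect the reduction to yield the clean factorization
\[
c(r)=r\,(1+\lambda_1 d_1(r))(1+\lambda_2 d_2(r)).
\]
For item (1) I would then show each factor is nonzero on $\Psi$. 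From the same computation $a_i(r):=\int_0^L\mu_i(x)e^{-(r+\cdots)x-M_i(x)}\,dx$ obeys $|a_i(r)|\le\int_0^L\mu_i e^{-M_i}\,dx=1$ whenever $\mathrm{Re}(r)\ge0$, and comparing real parts shows that $1+\lambda_i d_i(r)=0$ can only occur when $\mathrm{Re}(r)<0$ or $r=0$, neither of which happens on $\Psi$. Hence $c(r)\neq0$; the explicit solution lies in $D(\mathcal{A})$, $rI-\mathcal{A}$ is bijective, and (with $\mathcal{A}$ closed) the inverse is bounded. I expect this reduction to be the main obstacle: the singularity of $\mu_i$ at $x=L$ must be handled, and it is precisely the divergence $\int_0^L\mu_i=\infty$ in \eqref{mu2} that forces $e^{-M_i(L)}=0$ and kills the boundary terms, separating the finite-$L$ setting from the classical $L=\infty$ case.

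For item (2) I would reuse the explicit solution, the point being an a priori bound on the singular term. Using the same integrating-factor identity and Tonelli's theorem one gets $\int_0^L\mu_i|p_i|\,dx\le|p_i(0)|+\|y_i\|_{L^1}$, so from the ODEs $\|p_i'\|_{L^1}$ is controlled by $\|\vec y\|_X$. Thus $\mathcal{R}(r,\mathcal{A})$ maps bounded sets of $X$ into bounded sets of $\mathbb{R}\times W^{1,1}(0,L)\times W^{1,1}(0,L)$; since the first factor is finite-dimensional and $W^{1,1}(0,L)\hookrightarrow\hookrightarrow L^1(0,L)$ compactly, the resolvent is compact.

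For item (3), note $c(0)=0$ automatically, and solving the homogeneous system with $p_0=1$ produces an explicit nonnegative eigenvector; since $p_0=0$ forces $p_1\equiv p_2\equiv0$ through the boundary conditions, the eigenspace is one-dimensional. For $r<0$ the integrands are positive, so $d_i(r)>0$ and $c(r)=r(1+\lambda_1 d_1(r))(1+\lambda_2 d_2(r))\neq0$, while every $r>0$ lies in $\rho(\mathcal{A})$ by item (1); hence $0$ is the only real eigenvalue. Finally, for algebraic simplicity I would check that the constant functional $\mathbf 1=(1,1,1)\in X^*=\mathbb{R}\times L^\infty\times L^\infty$ lies in $\ker\mathcal{A}^*$: an integration by parts using $p_i(L)=0$ gives $\langle\mathcal{A}\vec p,\mathbf 1\rangle=0$ for all $\vec p\in D(\mathcal{A})$, reflecting conservation of probability. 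Because the eigenvector has nonnegative, not identically zero components, $\langle\vec p,\mathbf 1\rangle=p_0+\int_0^L p_1\,dx+\int_0^L p_2\,dx>0$; so by the Fredholm alternative (available since the resolvent is compact) $\vec p\notin\mathrm{Range}(\mathcal{A})$, no generalized eigenvector exists, and $0$ is simple.
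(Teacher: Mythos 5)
Your proposal is correct, and its core coincides with the paper's: solve the two ODE rows by integrating factors, impose the boundary conditions encoded in $D(\mathcal{A})$, and collapse the resolvent equation to a scalar relation $\Phi(r)p_0=F_r(\vec y)$, where the identity $\int_0^L\mu_i(x)e^{-\int_0^x(r+\mu_i(s))\,ds}\,dx=1-r\int_0^Le^{-\int_0^x(r+\mu_i(s))\,ds}\,dx$ --- valid precisely because \eqref{mu2} kills the boundary term at $x=L$ --- does the decisive work; your factorization $c(r)=r\big(1+\lambda_1d_1(r)\big)\big(1+\lambda_2d_2(r)\big)$ is exactly the paper's \eqref{EST_Phi_r} written in product form. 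You diverge in three sub-arguments, and in each case your route is defensible and in places tighter. (i) Spectrum: the paper checks $\Phi(r)\neq0$ for $\mathrm{Re}(r)>0$ by inspection and devotes the entire appendix to $r=ia$, separating real and imaginary parts of trigonometric integrals; your modulus argument --- a zero of a factor forces $a_i(r)=1+r/\lambda_i$ via $a_i(r)=1-rd_i(r)$, which has modulus strictly greater than $1$ for $r\in\Psi$, while $|a_i(r)|\le1$ whenever $\mathrm{Re}(r)\ge0$ --- disposes of the open right half-plane and the punctured imaginary axis in one stroke, so the appendix computation becomes unnecessary. (ii) Compactness: the paper asserts the resolvent is of ``Volterra type'' and hence compact; your a priori bound $\int_0^L\mu_i|p_i|\,dx\le|p_i(0)|+\|y_i\|_{L^1}$ plus the compact embedding $W^{1,1}(0,L)\hookrightarrow L^1(0,L)$ supplies the justification the paper skips, and simultaneously confirms that the constructed solution really lies in $D(\mathcal{A})$. (iii) Simplicity: the paper reads algebraic simplicity off the simple zero of $\Phi$ at $r=0$; you prove geometric simplicity directly ($p_0=0$ forces $p_1\equiv p_2\equiv0$) and exclude generalized eigenvectors by pairing with the conservation functional $\mathbf{1}\in\ker\mathcal{A}^*$. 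The only step there worth recording explicitly is that $\langle\mathcal{A}\vec p,\mathbf{1}\rangle=-p_1(L)-p_2(L)=0$ requires $p_i(L)=0$ on the domain; this is true because $p_i\in W^{1,1}(0,L)$ is continuous, so $p_i(L)\neq0$ together with \eqref{mu2} would contradict $\int_0^L\mu_ip_i\,dx<\infty$ (the paper records this fact only later, for the closed-loop operator). In short: the paper's treatment is shorter where it leans on standard or asserted facts, while yours is longer but self-contained and, on points (i) and (ii), closes gaps the paper leaves open.
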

\begin{proof}
To analyze the  properties of the resolvent  set  and the  resolvent operator, we first let   $r\in \Psi$,  $\vec{y}(\cdot)=(y_0, y_1(\cdot), y_2(\cdot))^T\in X$, and consider the operator equation 
\[(rI-\mathcal{A})\vec{p}=\vec{y},\]
that is,
\begin{align}
&\Big(r+\sum^{2}_{i=1}\lambda _i\Big)p_0-\sum_{i=1}^2\int_0^{L} \mu_i(x)p_i(x) \,dx=  y_0,\label{eq: solution for good}\\    
&\frac{dp_1(x)}{dx} +(r+\mu_{1}(x)+\lambda_{2})p_1(x)=y_1(x),\label{eq:Degrade}\\
&\frac{dp_2(x)}{dx}+(r+\mu_2(x))p_2(x)=y_{2}(x),\label{eq:Failure}
\end{align}
with the boundary conditions
\begin{align}
p_{1}(0)&=\lambda_{1} p_0, \quad  \quad p_2(0)=\lambda_{2} p_0+\lambda_{2}\int^{L}_{0} p_{1}(x)\, dx.
\label{IBC}
\end{align}
Solving \eqref{eq:Degrade} and  \eqref{eq:Failure} yields
\begin{align}
p_1(x)=&\lambda_1 p_0e^{-\int_0^x(r+\lambda_{2}+\mu_1(s))\,ds}+\int_0^xe^{-\int_{\tau}^x(r+\lambda_{2}+\mu_1(s))\,ds}y_1(\tau)\,d\tau
\label{oper_p1}
\end{align}
and
\begin{align}
p_2(x)
%====notes======
%=&\Big(\lambda_{2} p_0+\lambda_{2}\big(\int^{L}_{0}( \lambda_{1} p_0e^{-\int_0^x(r+ds\mu_1(s)+\lambda_{2})ds}
%+\int_0^xe^{-\int_{\tau}^x(r+ds\mu_1(s)+\lambda_{2})ds}y_1(\tau)\,d\tau)\, dx\big)\Big)\cdot e^{-\int_0^x(r+\mu_2(\alpha))ds}\\
%&+\int_0^xe^{-\int_{\tau}^x(r+\mu_2(\alpha))ds}y_2(\tau)d\tau\\
%====notes======
=&\lambda_{2} p_0e^{-\int_0^x(r+\mu_2(s))\,ds}+\lambda_{2}\lambda_{1} p_0e^{-\int_0^x(r+\mu_2(s))\,ds}\int^{L}_{0} 
e^{-\int_0^x(r+\lambda_{2}+\mu_1(s))\,ds}\,d x\nonumber\\
&+\lambda_{2}e^{-\int_0^x(r+\mu_2(s))\,ds}\int^{L}_{0}\int_0^xe^{-\int_{\tau}^x(r+\lambda_{2}+\mu_1(s))\,ds}y_1(\tau)d\tau\, dx\nonumber\\
&+\int_0^xe^{-\int_{\tau}^x(r+\mu_2(s))\,ds}y_2(\tau)d\tau. \label{oper_p2}
%=&\lambda_{2} p_0e^{-\int_0^x(r+\mu_2(\alpha))\,ds}(1+\lambda_{1} \int^{L}_{0} 
%e^{-\int_0^x(r+ds\mu_1(s)+\lambda_{2})\,ds}\,d x)\nonumber\\
%&+\lambda_{2}e^{-\int_0^x(r+\mu_2(\alpha))ds}\int^{L}_{0}\Psi_1(r)y_1\, dx+\Psi_2(r)y_2.\nonumber
\end{align}
%%=====details======
%To simplify the formulation, we let
%\begin{align*}
%\Phi_1(r)=\int_0^{L} \mu_1(x)e^{-\int^x_0(r+ds\mu_1(s)+\lambda_2)\,ds}\,dx, \quad
%\Phi_2(r)= \int_0^{L} \mu_2(x) e^{-\int_0^x(r+\mu_2(\alpha)\,dx)\,ds}\,dx,
%\end{align*}
%and define
%\begin{align*}
%\Psi_1(r)y_1=\int^x_0e^{-\int_{\tau}^x(r+ds\mu_1(s)+\lambda_{2})\,ds}y_1(\tau)\,d\tau, 
%\quad
%\Psi_2(r)y_2=\int_0^xe^{-\int_{\tau}^x(r+\mu_2(\alpha))ds}y_2(\tau)d\tau,
%\end{align*}
%for $r\in \mathbb{C}$. Note that 
%\begin{align}
%&\int_0^{L} \mu_i(x)e^{-\int^x_0(r+\mu_i(s))\,ds}\,dx
%=-\int_0^{L}e^{-rx}d\,e^{-\int^x_0\mu_i(s)\,ds}\nonumber\\
%&\qquad=-(e^{-rL}e^{-\int^L_0\mu_i(s)\,ds}-1)-r\int^L_0e^{-\int^x_0(r+\mu_i(s))\,ds}\,dx\label{1EST_phi}\\
%&\qquad=1-r\int^L_0e^{-\int^x_0(r+\mu_i(s))\,ds}\,dx, \label{2EST_phi}
%\end{align}
%where from \eqref{1EST_phi} to  \eqref{2EST_phi} we used the fact that $\int^L_0\mu_i(s)\,ds=\infty$.
%Thus
%\begin{align*}
%\Phi_1(r)=1-(r+\lambda_2)\int^L_0e^{-\int^x_0(r+ds\mu_1(s)+\lambda_2)\,ds}\,dx
%\end{align*}
%and
%\begin{align*}
%\Phi_2(r)= 1-r\int_0^{L} e^{-\int_0^x(r+\mu_2(\alpha)\,dx)\,ds}\,dx,
%\end{align*}
%%=====details======
Substituting \eqref{oper_p1}--\eqref{oper_p2} in~\eqref{eq: solution for good} gives 

\begin{equation*}%\label{eq: original solution of good state}
\Phi(r)p_0=F_r(y),    
\end{equation*}
where
%%===notes======
% \begin{align}
%&\left(r+\sum^{2}_{i=1}\lambda _i\right)p_0-\int_0^{L} \mu_1(x)\lambda_1 p_0e^{-\int_0^x(r+ds\mu_1(s)\,dx+\lambda_{2})\,ds}\,dx\\
%&-\int_0^{L}\mu_1(x) \int_0^xe^{-\int_{\tau}^x(r+ds\mu_1(s)+\lambda_{2})\,ds}y_1(\tau)\,d\tau\,dx\\
%&-\int_0^{L} \mu_2(x) \lambda_{2} p_0e^{-\int_0^x(r+\mu_2(\alpha))\,ds}\,dx\\
%&-\lambda_{2}\lambda_{1} p_0  \int^L_0\mu_2(x)e^{-\int_0^x(r+\mu_2(\alpha))\,ds}\,dx
%\int^{L}_{0} e^{-\int_0^x(r+ds\mu_1(s)+\lambda_{2})\,ds}\,d x\\
%&-\lambda_{2}\int_0^{L} \mu_2(x) e^{-\int_0^x(r+\mu_2(\alpha))ds}\,dx\int^{L}_{0}\int_0^xe^{-\int_{\tau}^x(r+ds\mu_1(s)+\lambda_{2})ds}y_1(\tau)d\tau\, dx\\
%&-\int_0^{L}\mu_2(x)\int_0^xe^{-\int_{\tau}^x(r+\mu_2(\alpha))ds}y_2(\tau)d\tau\,dx=  y_0
%\end{align}
%%===notes======
\begin{align*}%\label{eq: spec function}
\Phi(r)=&r+\sum^{2}_{i=1}\lambda _i-\lambda_1 \int_0^{L} \mu_1(x) e^{-\int_0^x(r+\lambda_{2}+\mu_1(s))\,ds}\,dx\\
&- \lambda_{2}\int_0^{L} \mu_2(x)e^{-\int_0^x(r+\mu_2(s))\,ds}\,dx\\
&-\lambda_{2}\lambda_{1}\left( \int^L_0\mu_2(x)e^{-\int_0^x(r+\mu_2(s))\,ds}\,dx\right)
\left(\int^{L}_{0} e^{-\int_0^x(r+\lambda_{2}+\mu_1(s))\,ds}\,d x \right)
%&=r\Big(1+\lambda_1 \int_0^{L} e^{-\int_0^x(r+ds\mu_1(s)\,dx+\lambda_{2})\,ds}\,dx
%+ \lambda_{2}\int_0^{L} e^{-\int_0^x(r+\mu_2(\alpha))\,ds}\,dx\\
%&+\lambda_{2}\lambda_{1}  \int^L_0e^{-\int_0^x(r+\mu_2(\alpha))\,ds}\,dx
%\int^{L}_{0} e^{-\int_0^x(r+ds\mu_1(s)+\lambda_{2})\,ds}\,d x\Big) 
\end{align*}
and
\begin{align}
F_r(y)=&y_0+\int_0^L\mu_1(x)\int_0^xe^{-\int_{\tau}^x(r+\lambda_{2}+\mu_1(s))\,ds}y_1(\tau)d\tau\,dx\nonumber\\
&+\lambda_{2}\left(\int_0^{L} \mu_2(x)e^{-\int_0^x(r+\mu_2(s))\,ds}\,dx\right)\left(\int^{L}_{0}\int_0^xe^{-\int_{\tau}^x(r+\lambda_{2}+\mu_1(s))\,ds}y_1(\tau)d\tau\, dx\right)\nonumber\\
&+\int_0^{L} \mu_2(x)\int_0^xe^{-\int_{\tau}^x(r+\mu_2(s))\,ds}y_2(\tau)\,d\tau\,dx.  \label{def_F}
%&=y_0+\int_0^L\mu_1(x)\Psi_1(r)y_1(x)\,dx+\lambda_{2}\Phi_2(r) \int^{L}_{0}\Psi_1(r)y_1(x)\, dx\\
%&\quad+\int_0^{L} \mu_2(x)\Psi_2(r)y_2(x)\,dx\\
%&=y_0+\int_0^L(\mu_1(x)+\lambda_{2}\Phi_2(r))\Psi_1(r)y_1(x)\,dx\\
%&\quad+\int_0^{L} \mu_2(x)\Psi_2(r)y_2(x)\,dx
\end{align}
Note that  for any $r\in \mathbb{C}$,
\begin{align}
&\int_0^{L} \mu_i(x)e^{-\int^x_0(r+\mu_i(s))\,ds}\,dx
=-\int_0^{L}e^{-rx}d\,e^{-\int^x_0\mu_i(s)\,ds}\nonumber\\
&\qquad=-(e^{-rL}e^{-\int^L_0\mu_i(s)\,ds}-1)-r\int^L_0e^{-\int^x_0(r+\mu_i(s))\,ds}\,dx\label{EST_mu}\\
&\qquad=1-r\int^L_0e^{-\int^x_0(r+\mu_i(s))\,ds}\,dx, \quad i=1,2,\label{1EST_mu}
\end{align}
where from \eqref{EST_mu} to  \eqref{1EST_mu} we used the assumption  that $\int^L_0\mu_i(s)\,ds=\infty, i=1,2$.
In particular,  when $r=0$,
\begin{align}
\int_0^{L} \mu_i(x)e^{-\int^x_0\mu_i(s)\,ds}\,dx=1. \label{2EST_mu}
\end{align}

Thus 
\begin{align}
\Phi(r)=&r+\sum^{2}_{i=1}\lambda _i-\lambda_1\Big(1-(r+\lambda_2) \int_0^{L} e^{-\int_0^x(r+\mu_1(s)\,dx+\lambda_{2})\,ds}\,dx\Big)\nonumber\\
&- \lambda_{2}\Big(1-r\int_0^{L} e^{-\int_0^x(r+\mu_2(s))\,ds}\,dx\Big)\nonumber\\
&-\lambda_{2}\lambda_{1} \Big(1-r \int^L_0e^{-\int_0^x(r+\mu_2(s))\,ds}\,dx\Big)
\int^{L}_{0} e^{-\int_0^x(r+\mu_2(s)+\lambda_{2})\,ds}\,d x \nonumber\\
=&r\Big(1+\lambda_1 \int_0^{L} e^{-\int_0^x(r+\mu_1(s)\,dx+\lambda_{2})\,ds}\,dx
+ \lambda_{2}\int_0^{L} e^{-\int_0^x(r+\mu_2(s))\,ds}\,dx \nonumber\\
&+\lambda_{2}\lambda_{1}  \int^L_0e^{-\int_0^x(r+\mu_2(s))\,ds}\,dx
\int^{L}_{0} e^{-\int_0^x(r+\lambda_{2}+\mu_1(s))\,ds}\,d x\Big).  \label{EST_Phi_r}
\end{align}
To understand $F_r$ in \eqref{def_F}, we have for any $r\in\mathbb{C}$,
\begin{align}
&\left|\int_0^{L} \mu_i(x)\int^x_0e^{-\int_{\tau}^x(r+\mu_i(s))\,ds}y_i(\tau)\,d\tau\,dx\right|
=\left|\int_0^{L}\left( \int^L_{\tau}\mu_i(x) e^{-\int_{\tau}^x(r+\mu_i(s))\,ds}\,dx\right)\,y_i(\tau)\,d\tau\right|\nonumber\\
&\qquad=\left|\int_0^{L}\left(- \int^L_{\tau}e^{-r(x-\tau)}\,d e^{-\int_{\tau}^x\mu_i(s)\,ds}\right)\,y_i(\tau)\,d\tau\right|\nonumber\\
%&\qquad=\int_0^{L}\left(1-e^{-\int^L_{\tau}(r+\mu_i(s))\,ds} -r \int^L_{\tau} e^{-\int_{\tau}^xr+\mu_i(s)\,ds}\,dx\right)\,y_i(\tau)\,d\tau\\
&\qquad=\left|\int_0^{L}\left(1-r \int^L_{\tau} e^{-\int_{\tau}^xr+\mu_i(s)\,ds}\,dx\right)\,y_i(\tau)\,d\tau\right|\nonumber\\
&\qquad\leq \sup_{\tau\in [0, L]}\left|1-r \int^L_{\tau} e^{-\int_{\tau}^xr+\mu_i(s)\,ds}\,dx\right|\cdot \|y_i\|_{L^1(0, L)} \nonumber\\
&\qquad\leq (1+L |r| e^{L|\mathrm{Re}(r)|}) \|y_i\|_{L^1(0, L)}. \label{EST_F}
\end{align}
Therefore, from \eqref{def_F} and \eqref{EST_F} it is easy to see that $F_{r}\colon X\to \mathbb{R}$ is  a compact integral operator.   Furthermore,  if $\Phi(r)$ is invertible, then 
\begin{align}
p_0=\Phi(r)^{-1}F_r(\vec{y}). \label{oper_p0}
\end{align}
It is clear that for any $r\in \mathbb{C}$, $r\in \rho(\mathcal{A})$ if  and only if $\Phi(r)\neq 0$.  In fact, $\Phi(r)$  is an analytic function defined on the complex plane 
  $\mathbb{C}$ and thus there are  at most countable isolated zeros of $\Phi(r)$. 
Moreover,  from \eqref{EST_Phi_r} it is  easy to verify that $\Phi(r)\neq 0$ if $\mathrm{Re}(r)>0$, and zero is a simple root of $\Phi(r)$ and the only real root, which implies that zero is a simple eigenvalue of $\mathcal{A}$ and the only real eigenvalue of $\mathcal{A}$. In addtion, from \eqref{oper_p1}--\eqref{oper_p2} it is easy to see that the resolvent operator $\mathcal{R}(r, \mathcal{A})$ is a Volterra type of integral operators on $X$, and hence it is compact for any $r\in \rho(\mathcal{A})$. 
 We now have established the statements (2)--(3) and the first part of (1).  It remains to show that there is no other spectra on the imaginary axis except zero.  The proof is elementary yet takes some space. We will leave it in Appendix \ref{app}. 
\end{proof}

With Proposition  \ref{prop1} at our disposal, we are in a position to establish  the  well-posedness of the system~\eqref{IVP}
using Phillips Theorem (e.g.~\cite[Theorem 2.1]{phillips1962semi}).
\begin{thm}\label{thm1}
The system operator $\mathcal{A}$ with its domain $D(\mathcal{A})$ defined in \eqref{def_A}--\eqref{D_A} generates a positive  $C_{0}$-semigroup  of contraction on $X$, denoted by $\mathcal{T}(t)=e^{\mathcal{A}t}, t\geq0$.
\end{thm}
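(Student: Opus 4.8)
The plan is to invoke the Phillips theorem for positive contraction semigroups, which requires three ingredients: that $\mathcal{A}$ be densely defined, that it be dispersive, and that the range of $\lambda I-\mathcal{A}$ equal $X$ for some $\lambda>0$. The density of $D(\mathcal{A})$ in $X$ is routine, since $D(\mathcal{A})$ already contains all triples of smooth functions that vanish in a neighborhood of $x=L$ (so that $\int_0^L\mu_i p_i\,dx<\infty$ holds automatically by \eqref{mu1}) and that satisfy the two boundary relations in \eqref{D_A}, and such triples are dense in $X$. The range condition is immediate from Proposition \ref{prop1}: since $\rho(\mathcal{A})$ contains the open right half-plane, every $\lambda>0$ makes $\lambda I-\mathcal{A}$ a bijection onto $X$, so $\mathrm{Ran}(\lambda I-\mathcal{A})=X$.

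The heart of the argument is dispersivity. Given $\vec{p}=(p_0,p_1,p_2)\in D(\mathcal{A})$, I would take as the normalized tangent functional to the positive part the vector of indicators $\psi=(\mathbf{1}_{\{p_0>0\}},\mathbf{1}_{\{p_1>0\}},\mathbf{1}_{\{p_2>0\}})$, which lies in the unit ball of $X^{*}=\mathbb{R}\times L^{\infty}\times L^{\infty}$ and satisfies $\langle\vec{p},\psi\rangle=\|\vec{p}^{+}\|_X$, then compute $\langle\mathcal{A}\vec{p},\psi\rangle$ term by term. The transport contributions $\int_0^L\mathbf{1}_{\{p_i>0\}}(-p_i')\,dx$ are integrated by parts and equal $p_i^{+}(0)-p_i^{+}(L)$; here a preliminary observation is needed, namely that the defining conditions $\int_0^L\mu_i p_i\,dx<\infty$ together with the singularity \eqref{mu2} force $p_i(L)=0$ (otherwise $\mu_i p_i$ would fail to be integrable near $L$), so the outgoing boundary terms at $x=L$ drop out. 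The incoming boundary terms $p_i^{+}(0)$ are then rewritten through the boundary conditions \eqref{IBC}, using the subadditivity $(a+b)^{+}\le a^{+}+b^{+}$ and $\big(\int p_1\big)^{+}\le\int p_1^{+}$ to bound them above by $\lambda_1 p_0^{+}$ and $\lambda_2 p_0^{+}+\lambda_2\int_0^L p_1^{+}\,dx$, respectively.

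Assembling the pieces, the loss term $-(\lambda_1+\lambda_2)p_0^{+}$ from the good state, the repair-return terms $\int_0^L\mu_i p_i^{+}\,dx$ (bounded above using $\mathbf{1}_{\{p_0>0\}}\le 1$), the boundary inflows, and the dissipation terms $-\int_0^L\mu_i p_i^{+}\,dx$ and $-\lambda_2\int_0^L p_1^{+}\,dx$ combine so that every group of like terms cancels exactly, yielding $\langle\mathcal{A}\vec{p},\psi\rangle\le 0$. This exact cancellation is the analytic reflection of probability conservation in the underlying master equation, and verifying it — in particular keeping careful track of the boundary contributions generated by the integral boundary condition on $p_2$ and of the singular behavior of $\mu_i$ at $L$ — is the only delicate point; everything else is bookkeeping. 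With dispersivity, the range condition, and density all in hand, Phillips' theorem yields at once that $\mathcal{A}$ generates a positive $C_0$-semigroup of contractions $\mathcal{T}(t)=e^{\mathcal{A}t}$, $t\ge 0$, on $X$.
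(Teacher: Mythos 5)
Your proposal is correct and follows essentially the same route as the paper: Phillips' theorem with the three ingredients of density of $D(\mathcal{A})$, the range condition via Proposition \ref{prop1}, and dispersivity computed against the indicator functional $\psi=(\mathbf{1}_{\{p_0>0\}},\mathbf{1}_{\{p_1>0\}},\mathbf{1}_{\{p_2>0\}})$, which is exactly the paper's $\vec{q}$, together with the boundary conditions and subadditivity of the positive part. The only cosmetic difference is that you deduce $p_i(L)=0$ from $\int_0^L\mu_i p_i\,dx<\infty$ and \eqref{mu2} (a valid observation, used by the paper elsewhere in Remark \ref{rem1}), whereas the paper simply retains the terms $-[p_i(L)]^+$ as nonpositive contributions in the final estimate.
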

\begin{proof}
According to   Phillips Theorem 
(e.g.~\cite[Theorem 2.1]{phillips1962semi}), it suffices  to show  that (1) $D(\mathcal{A})$  is dense in $X$;
(2) The range $R(I- \mathcal{A}) = X$; and (3) $\mathcal{A}$ is dispersive. 

(1) To show that $\overline{D(\mathcal{A})}=X$, we first let 
\[S=\mathbb{R}\times C^\infty_0[0,L]\times C^\infty_0[0,L]. \]
Since $\overline{S}=X$, it suffices  to prove that $S\subset \overline{\mathcal{D}(\mathcal{A})} $. Let $\vec{y}=(y_0, y_1,y_2)^T \in S$ and consider the sequence $\{\vec{p}_n = (p_{0,n}, p_{1,n}, p_{2,n})^T\}_{n\geq 1}$,  where $p_{0,n}=y_0$, 
\begin{align*}
p_{1,n}(x) :=& \begin{cases}
\lambda_1y_0(1 - nx)^2 + y_1(x), & \text{for } x \in [0, \frac{1}{n}), \\
y_1(x), & \text{for } x \in [\frac{1}{n}, L],
\end{cases}
\end{align*}
and
\begin{align*}
p_{2,n}(x) :=& \begin{cases}
(\lambda_{2}y_0 + \lambda_{2}\int_0^Ly_1(x)dx)(1 - nx)^2 + y_2(x), & \text{for } x \in [0, \frac{1}{n}), \\
y_2(x), & \text{for } x \in [\frac{1}{n}, L].
\end{cases}
\end{align*}
It is clear  that $\vec{p}_n \in \mathcal{D}(\mathcal{A})$  based on the assumption \eqref{mu3}, for all $n \in \mathbb{N}$. Moreover,
\begin{align*}
\|\vec{p}_n - \vec{y}\|_X &= |p_{0,n}-y_0|+\int^L_0|p_{1,n}(x)-y_{1}(x)|\,dx+\int^L_0|p_{2,n}(x)-y_{2}(x)|\,dx\\
&\leq \int^{1/n}_0|\lambda_1y_0(1 - nx)^2|\,dx +\int^{1/n}_0|(\lambda_{2}y_0 + \lambda_{2}\int_0^Ly_1(x)dx)(1 - nx)^2|\,dx\\
&\leq (\lambda_1|y_0| + |(\lambda_{2}y_0 + \lambda_{2}\int_0^Ly_1(x)dx)|)\frac{1}{3n}
\end{align*}
holds for all $n \in \mathbb{N}$. Therefore, $S\subset \overline{\mathcal{D}(\mathcal{A})} $, and hence
 $X=\overline{S}\subseteq \overline{\mathcal{D}(\mathcal{A})} \subseteq X$. It follows that  $\overline{\mathcal{D}(\mathcal{A})} = X$.

(2) From Proposition~\ref{prop1},  we have $1\in\rho(\mathcal{A})$ and thus, for any $\vec{y}\in X$, there exist a unique $\vec{p}\in\mathcal{D}(\mathcal{A})$ such that $(I-\mathcal{A})\vec{p}=\vec{y}$.    

(3) It remains to show that $\mathcal{A}$ is  dispersive. For  $\vec{p}=(p_0, p_1, p_2)^T\in\mathcal{D}(\mathcal{A})$, let
   \begin{align}\label{eq: dispersive map}
 \vec{q}(x)=\left(\frac{[p_0]^+}{p_0}, \frac{[p_1(x)]^+}{p_1(x)}, \frac{[p_2(x)]^+}{p_2(x)} \right),%\in X^*=\mathbb{R}\times L^\infty(0, L)\times L^\infty(0, L), 
 \end{align}
where
  \[  [p_0]^+ =
  \begin{cases}
    p_0,      & \quad \text{if } p_0>0,\\
    0,  & \quad \text{if } p_0\leq 0; \quad
      \end{cases}
        [p_i(x)]^+ =
  \begin{cases}
    p_i(x),     & \quad \text{if } p_i(x)>0,\\
    0,  & \quad \text{if } p_i(x)\leq 0, \quad i=1,2.
  \end{cases}
\]
 Note that the dual space of $X$ is give by $X^*=\mathbb{R}\times L^\infty(0, L)\times L^\infty(0, L)$ and $ \Psi\in X^*$.  Then the duality pairing  
 \begin{align*}
 \langle \mathcal{A}\vec{p}, \vec{q}\rangle=& \left (-\sum^{2}_{i=1}\lambda_i p_0 + \sum_{i=1}^2\int_0^{L} \mu_i(x)p_i(x) \,dx\right)\frac{[p_0]^+}{p_0} \\
 &-\int^L_0\left(\frac{dp_1(x)}{dx}+\mu_{1}(x)p_1(x)+\lambda_{2}p_1(x)\right) \frac{[p_1(x)]^+}{p_1(x)}dx\\
 &-\int^L_0\left(\frac{d{p_2(x)}}{d{x}}+\mu_{2}(x)p_2(x) \right) \frac{[p_2(x)]^+}{p_2(x)}dx.
   \end{align*}
  Let $W_i= \{x \in [0,L] : p_1(x) > 0\}$ and  $W^c_i= \{x \in [0,L] : p_1(x) \leq 0\}$ for $i=1,2$. Then 
  \begin{align*}
  \int^L_0 \frac{d{p_i(x)}}{d{x}}\frac{[p_i(x)]^+}{p_i(x)}\,dx
  = &\int_{W_i} \frac{d{p_i(x)}}{d{x}}\frac{[p_i(x)]^+}{p_i(x)}\,dx
  + \int_{W^c_i} \frac{d{p_i(x)}}{d{x}}\frac{[p_i(x)]^+}{p_i(x)}\,dx\\
  =&\int_{W_i} \frac{d{p_i(x)}}{d{x}}\frac{[p_i(x)]^+}{p_i(x)}\,dx\\
  =&\int^L_{0} \frac{d{[p_i]^+}}{d{x}}\,dx=[p_i(L)]^+-[p_i(0)]^+.
  \end{align*}
 Recall that $\mu_i(x)\geq 0, i=1,2$, for $x\in [0, L]$. Thus 
     \begin{align*}
\langle \mathcal{A}\vec{p}, \vec{q}\rangle = &-\sum^{2}_{i=1}\lambda _i[p_0]^+ 
+\sum_{i=1}^2\int_0^{L} \mu_i(x)p_i(x) dx\frac{[p_0]^+}{p_0}
-[p_1(L)]^++[p_1(0)]^+\\
 &-\int_0^L\left(\mu_{1}(x)[p_1(x)]^++\lambda_{2}[p_1(x)]^+\right)dx\\
 &-[p_2(L)]^++ [p_2(0)]^+ -\int_0^L \mu_{2}(x)[p_2(x)]^+dx \\
 \leq  &-\sum^{2}_{i=1}\lambda _i[p_0]^+ 
+\sum_{i=1}^2\int_0^{L} \mu_i(x)[p_i(x)]^+\, dx
-[p_1(L)]^++\lambda_1[p_0]^+\\
 &-\int_0^L\left(\mu_{1}(x)[p_1(x)]^++\lambda_{2}[p_1(x)]^+\right)dx\\
 &-[p_2(L)]^++ \lambda_{2} [p_0]^++ \lambda_{2}\int^{L}_{0} [p_{1}(x)]^+\, dx  -\int_0^L \mu_{2}(x)[p_2(x)]^+dx\\
 = &-\sum_{i=1}^2[p_i(L)]^+\leq 0,
  \end{align*}
which indicates that $\mathcal{A}$ is dispersive and this completes the proof.
\end{proof}
With the help of Theorem \ref{thm1}, the following results hold immediately. 
\begin{corollary}\label{cor1}
  For a non-negative initial datum  $\vec{p}_0=(\phi_0, \phi_1, \phi_2)^T\in X$, there exists a unique and non-negative  solution given by $\vec{p}(\cdot, t)=\mathcal{T}(t)\vec{p}_0$ to the Cauchy problem \eqref{IVP}.
  Moreover, if $\|\vec{p}_0\|_{X}=1$, then
  \begin{align*}
 \| \vec{p}(\cdot, t)\|_{X}\leq 1, \quad \forall t\geq0.
\end{align*}
\end{corollary}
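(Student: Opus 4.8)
The plan is to derive the corollary directly from the three properties of the semigroup $\mathcal{T}(t)$ established in Theorem~\ref{thm1}: that $\mathcal{A}$ generates a $C_0$-semigroup which is well-defined for all $t\geq 0$, positive, and contractive. No new estimates are required; the argument is simply the standard dictionary between infinitesimal generators and abstract Cauchy problems.

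For existence and uniqueness I would invoke the basic theory of $C_0$-semigroups. Since $\mathcal{A}$ is the infinitesimal generator of $\mathcal{T}(t)$, the function $\vec{p}(\cdot,t)=\mathcal{T}(t)\vec{p}_0$ is the unique mild solution of \eqref{IVP} for every $\vec{p}_0\in X$, and it is a classical solution when $\vec{p}_0\in D(\mathcal{A})$. Uniqueness is automatic, since any solution of the Cauchy problem must coincide with the orbit of the semigroup through $\vec{p}_0$.

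Non-negativity follows immediately from positivity. Writing $X_+=\{f\in X:\,f\geq 0\}$ for the positive cone, a positive semigroup by definition satisfies $\mathcal{T}(t)X_+\subseteq X_+$ for all $t\geq 0$. As $\vec{p}_0\geq 0$ lies in $X_+$, the orbit $\vec{p}(\cdot,t)=\mathcal{T}(t)\vec{p}_0$ stays in $X_+$, which is exactly the claimed non-negativity.

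The norm bound follows from the contraction property: since $\|\mathcal{T}(t)\vec{p}_0\|_X\leq\|\vec{p}_0\|_X$ for all $t\geq 0$, the hypothesis $\|\vec{p}_0\|_X=1$ gives $\|\vec{p}(\cdot,t)\|_X\leq 1$. I expect no genuine obstacle here, as the corollary is essentially a repackaging of Theorem~\ref{thm1}; the one conceptual point worth flagging is that one might anticipate exact conservation of total probability, $\|\vec{p}(\cdot,t)\|_X=1$, whereas the finiteness of $L$ together with the boundary loss terms $-[p_1(L)]^+-[p_2(L)]^+$ appearing in the dispersivity computation of Theorem~\ref{thm1} yield only the inequality $\leq 1$ claimed here.
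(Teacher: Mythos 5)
Your proposal is correct and follows essentially the same route as the paper, which states Corollary~\ref{cor1} as an immediate consequence of Theorem~\ref{thm1}: generation of the $C_0$-semigroup gives existence and uniqueness, positivity gives non-negativity, and contractivity gives $\|\vec{p}(\cdot,t)\|_X\leq\|\vec{p}_0\|_X=1$. One caveat on your closing aside only: the paper's Remark~\ref{rem1} shows that for non-negative initial data of norm one the norm is in fact exactly conserved, since assumption \eqref{mu2} forces $p_1(L,t)=p_2(L,t)=0$ by \eqref{cond_L}, so the boundary terms $-[p_1(L)]^+-[p_2(L)]^+$ do not produce genuine loss along non-negative orbits; the inequality in the corollary is simply what contraction gives without invoking that finer structure.
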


%%====compact resolvent=======
%With the help of \eqref{oper_p1}--\eqref{oper_p0}, we have
%\begin{align}
%(\mathcal{R}(r, A) \vec{y})(x)=&\Big(\phi(r)^{-1}F(\vec{y}), \lambda_1\phi(r)^{-1}F(\vec{y}) e^{-\int_0^x(r+ds\mu_1(s)+\lambda_{2})\,ds}+\Psi_1(r)y_1,\\
%&\lambda_{2} \phi(r)^{-1}F(\vec{y})e^{-\int_0^x(r+\mu_2(\alpha))\,ds}(1+\lambda_{1} \int^{L}_{0} 
%e^{-\int_0^x(r+ds\mu_1(s)+\lambda_{2})\,ds}\,d x)\nonumber\\
%&+\lambda_{2}e^{-\int_0^x(r+\mu_2(\alpha))ds}\int^{L}_{0}\Psi_1(r)y_1\, dx+\Psi_2(r)y_2\Big).
%\end{align}
%%====compact resolvent=======
%
%
Next we address the asymptotic behavior  of system \eqref{IVP}. According to Proposition \ref{prop1} (3), we know that zero is a simple eigenvalue of $\mathcal{A}$, 
thus setting $$\mathcal{A}\vec{p}=0,$$ one can easily solve the  unique steady-state solution of  system \eqref{IVP}, which is the eigenfunction  corresponding to the eigenvalue zero. Denote it by
$
\vec{p}_{e}(x)=(p_{e0},p_{e1}(x),p_{e2}(x)),
$
where
\begin{align}
&p_{e1}(x)=\lambda_1p_{e0}e^{-\int^{x}_{0}(\lambda_{2}+\mu_1(s))\,ds}, \label{p_e1}\\
&p_{e2}(x) =\lambda_{2}p_{e0}e^{-\int^{x}_{0}\mu_2(s)\, ds}+\lambda_{1}\lambda_2p_{e0}e^{-\int^{x}_{0}\mu_2(s)\, ds}
\int^{L}_{0} e^{-\int^{x}_{0}(\lambda_{2}+\mu_1(s))\,ds}\, dx,\label{p_e2}
\end{align}
and  $p_{e0}$ satisfies 
\begin{align}
p_{e0}=\frac{1}{(1+\lambda_1C_0)(1+\lambda_{2} C_1)}\label{p_e0}
\end{align}
with $C_0=\int^T_0e^{-\int^{x}_{0}(\lambda_{2}+\mu_1(s))\,ds}\,dx$ and $C_1=\int^L_0e^{-\int^{x}_{0}\mu_2(s)\, ds}\,dx$.

%With the help of Proposition  \ref{prop1}, Theorem \ref{thm1}, and \cite[Thm. 2.4]{} we can obtain the asymptotic stability of time-dependent solution of the system.
%===strong stability=====
%\begin{corollary}\label{prop: steady state}
%Consider~\eqref{eq: Cauchy Problem}-\eqref{eq:ini_con}, then we have that
%\[
%\lim_{t\rightarrow\infty}\|\mathcal{T}(t)\vec{p}(0)-\vec{p}_{ss}\|_{X}=0,
%\]
%where $p_{e0}=1$.
%\end{corollary}
%The following result establishes sufficient conditions for exponentially converges. We state the result and refer the reader to~\cite[pg~331, Cor~3.3]{engel2000one} for proof.
%===strong stability=====

When the   maximum repair time $L$ is finite,  we can  further  show  that the semigroup $\mathcal{T}(t)$ is eventually compact, i.e.,  there exists $t_0>0$ such that 
$\mathcal{T}(t_0)$ is compact.  Consequently,  we can establish the exponential stability result. 

\begin{prop}\label{prop2}
The $C_0$-semigroup $\mathcal{T}(t)$  is compact on $X$ when  $t>2L$.
\end{prop}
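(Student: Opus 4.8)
The plan is to exploit the finite propagation speed of the transport dynamics: once $t$ exceeds twice the maximal repair length $L$, the two $L^1$-components of $\mathcal{T}(t)\vec{p}$ retain no trace of the initial profiles $\phi_1,\phi_2$ and are slaved, through fixed integrable kernels, to the scalar history of $p_0$. This ``flushing'' collapses the infinite-dimensional part of the state onto a single bounded, uniformly regular scalar, after which compactness follows from the Fr\'echet--Kolmogorov (Kolmogorov--Riesz) criterion in $L^1(0,L)$.

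First I would integrate \eqref{system} along its unit-speed characteristics together with the boundary coupling \eqref{IBC}, splitting each transport component into a part carried from the initial datum (supported on $\{x>t\}$) and a part generated at the boundary $x=0$. Since $x\le L$, for $t>L$ the initial-datum part of $p_1,p_2$ has already left the interval, and for $t>2L$ the boundary value $p_2(0,\cdot)$---which through \eqref{IBC} sees $\hat{p}_1(\cdot)=\int_0^L p_1(y,\cdot)\,dy$---also loses its dependence on $\phi_1$. One is then left, for $t>2L$, with the closed form $p_1(x,t)=\lambda_1 p_0(t-x)g_1(x)$ and $p_2(x,t)=\big(\lambda_2 p_0(t-x)+\lambda_1\lambda_2\int_0^L p_0(t-x-y)g_1(y)\,dy\big)g_2(x)$, where $g_1(x)=e^{-\int_0^x(\lambda_2+\mu_1(s))\,ds}$ and $g_2(x)=e^{-\int_0^x\mu_2(s)\,ds}$; note $g_i(L)=0$ by \eqref{mu2} and $g_i\in L^1(0,L)$.

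The second and decisive step is to upgrade the mere continuity of $\tau\mapsto p_0(\tau)$ to a modulus of continuity that is uniform over the unit ball $B$ of $X$. Using Corollary~\ref{cor1} to bound $|p_0|,\hat{p}_1,\hat{p}_2$ by $\|\vec{p}_0\|_X\le 1$, and inserting the representation above into the first equation of \eqref{system}, the feedback integrals become convolutions $\int_0^L k_i(x)(\cdot)(\tau-x)\,dx$ against the fixed kernels $k_i=\mu_i g_i$, which by \eqref{2EST_mu} satisfy $\|k_i\|_{L^1}\le 1$. Hence for $\tau>L$ the right-hand side of $\dot{p}_0=-(\lambda_1+\lambda_2)p_0+\int_0^L\mu_1 p_1\,dx+\int_0^L\mu_2 p_2\,dx$ is a bounded continuous function of $\tau$, giving $|\dot{p}_0(\tau)|\le M$ with $M$ independent of $\vec{p}_0\in B$; that is, $p_0$ is uniformly Lipschitz on $[L,\infty)$. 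Finally I would verify the Fr\'echet--Kolmogorov conditions for $\mathcal{T}(t)(B)$ with $t>2L$: boundedness in $L^1$ is immediate from contractivity, and for the equicontinuity of $L^1$-translations the pointwise factors $p_0(t-x)$ carry arguments in $[t-L,t]\subset(L,\infty)$, where the uniform Lipschitz bound controls the increments while the multipliers $g_i$ contribute through their $L^1$-translation continuity; in the nested term $\int_0^L p_0(t-x-y)g_1(y)\,dy$ the argument of $p_0$ may dip to $t-2L>0$, but there only boundedness of $p_0$ is needed, since the $x$-increment can be absorbed onto the $L^1$-kernel $g_1$ after a change of variables. The $\mathbb{R}$-component is trivially precompact, all estimates are uniform over $B$, so $\mathcal{T}(t)(B)$ is relatively compact and $\mathcal{T}(t)$ is compact for $t>2L$.

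The main obstacle is precisely the ball-uniform regularity of $p_0$ in the second step: the semigroup is not analytic, so no smoothing into $D(\mathcal{A})$ is available, and for $\vec{p}_0\notin D(\mathcal{A})$ the feedback integrals need not even be finite at small times; what rescues the argument is that once the data is flushed ($\tau>L$) these integrals are automatically convolutions of $L^1$ kernels with the bounded $p_0$, which both makes them finite and forces the Lipschitz bound. I would justify the $p_0$-equation for mild solutions by approximating $\vec{p}_0$ by elements of $D(\mathcal{A})$ and passing to the limit in the continuous characteristic representation. As an alternative route, I would note that Proposition~\ref{prop1}(2) already provides a compact resolvent, so by the standard semigroup criterion it suffices to establish eventual norm-continuity of $\mathcal{T}(\cdot)$ for $t>2L$; this can likewise be read off the explicit representation above, with the threshold $2L$ emerging for the same propagation reason.
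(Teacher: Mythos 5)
Your proposal is correct, and its computational core --- the characteristic representation showing that for $t>2L$ both $L^1$-components are slaved to the history of $p_0$ through fixed $L^1$ kernels, together with a ball-uniform Lipschitz-type estimate for $p_0$ --- is exactly the content of the paper's \eqref{sol_p0}--\eqref{sol_p2} and \eqref{EST_diff_p0}--\eqref{EST_diff_p2}. Where you genuinely diverge is the final compactness mechanism. The paper never verifies a Kolmogorov--Riesz criterion: it invokes the compact resolvent from Proposition~\ref{prop1}(2) and reduces the claim, via the criterion in \cite[Cor.\,3.4, p.\,50]{pazy1983semigoroups}, to eventual continuity of $t\mapsto \mathcal{T}(t)$ in the uniform operator topology for $t>2L$, i.e.\ to the bound $\|\mathcal{T}(t+h)-\mathcal{T}(t)\|_{\mathcal{L}(X)}\leq Ch$, which is precisely your uniform increment estimate propagated through the three components. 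Your primary route instead proves relative compactness of $\mathcal{T}(t)B$ ($B$ the unit ball) directly in $\mathbb{R}\times L^1\times L^1$, trading the resolvent compactness and the semigroup-theoretic citation for a hands-on verification of translation equicontinuity; your observation that in the nested term $\int_0^L p_0(t-x-y)g_1(y)\,dy$ only \emph{boundedness} of $p_0$ is needed (the increment being absorbed onto the fixed kernel $g_1$ after a change of variables) is the right fix for the fact that the Lipschitz bound is unavailable for arguments below time $L$. So your proof is more self-contained --- it uses neither Proposition~\ref{prop1}(2) nor Pazy's theorem --- at the cost of redoing compactness by hand, while the paper's is shorter because the compact resolvent has already been paid for; and the route you sketch as an ``alternative'' in your last paragraph is in fact the paper's actual proof. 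One point to tighten: the pointwise bound $|\dot p_0(\tau)|\leq M$ for $\tau>L$ should be justified for mild solutions, either by your density argument or, as the paper effectively does, by estimating increments of the integral representation \eqref{sol_p0} directly rather than differentiating; with that in place, all your estimates (including $\|\mu_i g_i\|_{L^1}\leq 1$ from \eqref{2EST_mu} and $g_i(L)=0$ from \eqref{mu2}) are sound and uniform over $B$.
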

\begin{proof}
Since the resolvent operator $\mathcal{R}(r,\A)$ for $r\in \rho(\A)$ is compact based on Proposition \ref{prop1} (2), it remains  to  show that $\mathcal{T}(t)$ is continuous in the uniform operator topology for $t>2L$ by  \cite[Cor.\,3.4, p.\,50]{pazy1983semigoroups}, that is, 
\begin{equation}
\lim_{h\to0}\|\mathcal{T}(t+h)-\mathcal{T}(t)\|_{\mathcal{L}(X)}\to 0,  \label{EST_T}
\end{equation}
uniformly  as $h\to0$.  
%Since $X_1:=(D(\mathcal{A}), \|\cdot\|_{\mathcal{A}})\subset \mathbb{R}\times W^{1,1}(0, L)\times W^{1,1}(0, L) \hookrightarrow X$ is compact,  where $\|\psi\|_{\mathcal{A}}=\|\mathcal{A}\psi\|_{X}$ for $\psi\in D(\mathcal{A})$,  we know that $\A$ has compact resolvent  (e.g.~\cite[Prop.\,4,25, p.\,117]{engel2000one}). This can be also shown using the compactness of the integral operator  $F_r$ defined in \eqref{def_F} in Appendix \ref{app}.
  To prove \eqref{EST_T}, we first solve the equations  \eqref{system}--\eqref{sys_IC}   using the method of  characteristics and obtain
   that 
   $$\mathcal{T}(t)\vec{p}_0=(p_0(t),p_1(\cdot,t),p_2(\cdot,t))^{T},$$
   for $\vec{p}_0=(\phi_0, \phi_1, \phi_2)^T\in X$, where
   \begin{equation}
p_0(t)=e^{-\sum_{i=1}^2\lambda_i  t} \phi_{0} +\sum^2_{i=1}\int_0^t\int_0^{L}e^{-\sum_{i=1}^2\lambda_i(t-\tau)}\mu_i(x)p_i(x,\tau)\, dx d\tau,    \label{sol_p0}
\end{equation}

\begin{equation}
p_1(x,t)=\left\{\begin{array}{l}
%\displaystyle\dot{\omega}(t)=-\mu(t+\xi)\omega(t),\\
\displaystyle \lambda_1 p_0(t-x) e^{-\int_{0}^x(\mu_1(\tau)+\lambda_{2})d\tau} ,\quad\text{ if $t>x$},\\
\displaystyle \phi_{1}(x-t) e^{-\int_{x-t}^x(\mu_1(\tau)+\lambda_{2})d\tau},\quad\ \text{ if $ t\leq x$},
\end{array}\right.
\label{sol_p1}
\end{equation}
and
\begin{equation}
p_2(x,t)=\left\{\begin{array}{l}
\displaystyle \left(\lambda_{2} p_0(t-x)+\lambda_{1}\lambda_{2}\int^{L}_{0}  e^{-\int_{0}^x(\mu_1(\tau)+\lambda_{2})d\tau}p_0(t-2x)\, dx\right)e^{-\int_{0}^x\mu_2(\tau)d\tau},\ \text{ if $t>2x$},\\
\displaystyle \left(\lambda_{2} p_0(t-x)+\lambda_{2}\int^{L}_{0} e^{-\int_{2x-t}^x(\mu_1(\tau)+\lambda_{2})d\tau}\phi_{1}(2x-t)\, dx \right)e^{-\int_{0}^x\mu_2(\tau)d\tau},\\
\hspace{4.5in}
\text{ if $x<t\leq 2x$},\\
\displaystyle \phi_{2}(x-t)e^{-\int_{x-t}^x\mu_2(\tau)d\tau},\quad\text{ if $ t\leq x$}.
\end{array}\right.
\label{sol_p2}
\end{equation} 
To establish  \eqref{EST_T},  we have 
\begin{align}
\|T(t+h)\vec{p}_0-\mathcal{T}(t)\vec{p}_0\|_{X}=&|p_0(t+h)-p_0(t)|+\int^L_0|p_1(x, t+h)-p_1(x, t)|\,dx\nonumber\\
&+\int^L_0|p_2(x, t+h)-p_2(x, t)|\,dx, \label{2EST_T}
\end{align}
for any $\vec{p}_0 \in X$ and $h>0$. From \eqref{sol_p0}, we have
\begin{align}
&|p_0(t+h)-p_0(t)|\leq  \vert  e^{-\sum^{2}_{i=1}\lambda_{i}(t+h)}-e^{-\sum^{2}_{i=1}\lambda_{i}t}\vert \cdot  \vert\phi_{0}(t)\vert \nonumber\\
&\qquad+\sum^{2}_{i=1}\left(\int^{t+h}_0-\int^{t}_0\right)\left(e^{-\sum^{2}_{i=1}\lambda_{i}(t+h-\tau) } \left \vert \int^{L}_{0}\mu_{i}(x)p_{i}(x,\tau)\, dx\right \vert \right)\,d\tau \nonumber\\
&\qquad+\sum^{2}_{i=1}\int^{t}_{0}\left \vert (e^{-\sum^{2}_{i=1} \lambda_{i}(t+h-\tau) } -e^{-\sum^{2}_{i=1}\lambda_{i}(t-\tau) }) \right\vert
\cdot \left \vert \int^{L}_{0}\mu_{i}(x)p_{i}(x,\tau)\,dx \right \vert d\tau\nonumber\\
&\quad\leq  \Big(\sum_{i=1}^2\lambda_i \Big)h \vert\phi_{0}(t)\vert
+\sum^{2}_{i=1} \int^{t+h}_t\left \vert \int^{L}_{0}\mu_{i}(x)p_{i}(x,\tau)\, dx\right \vert \,d\tau \nonumber\\
&\qquad+\sum^{2}_{i=1}\int^{t}_{0}\Big(\sum_{i=1}^2\lambda_i\Big)he^{-\sum^{2}_{i=1}\lambda_{i}(t-\tau)}
\left \vert \int^{L}_{0}\mu_{i}(x)p_{i}(x,\tau)\,dx \right \vert d\tau, \label{EST_diff_p0}
\end{align}
%where
%\begin{align}
% \vert  e^{-\sum^{2}_{i=1}\lambda_{i}(t+h)}-e^{-\sum^{2}_{i=1}\lambda_{i}t}\vert \leq h\Big(\sum_{i=1}^2\lambda_i \Big) e^{-\sum^{2}_{i=1}\lambda_{i}t}, \label{EST_diff_exp}
% \end{align}
where from \eqref{sol_p1} for $t>2L$, we get
\begin{align}
 \int_0^{L}\mu_1(x)p_1(x,\tau)\, dx\,d\tau&\leq\lambda_1 \int_0^{L}|p_0(\tau-x)|e^{-\lambda_1x}\mu_1(x)e^{-\int_{0}^x\mu_1(s)ds} \,dx\,d\tau\nonumber\\
 &\leq  \lambda_1\sup_{t\geq 0}|p_0(t)| \label{EST_mu_p1}
\end{align}
and   from~\eqref{sol_p2} for $t>2L$, we get
\begin{align}
&\int_0^{L}\mu_2(x)p_2(x,\tau)\, dx
\leq\lambda_2\int_0^{L}|p_0(\tau-x)|\mu_2(x)e^{-\int_{0}^x\mu_2(\tau)d\tau}\,dx\nonumber\\
&\qquad\quad+\lambda_1\lambda_{2} \left(\int^{L}_{0} e^{-\int_{0}^x(\mu_1(\tau)+\lambda_{2})d\tau}|p_{0}(\tau-2x)|\, dx\right)
 \cdot\left(\int^L_0\mu_2(x) e^{-\int_{0}^x\mu_2(\tau)d\tau}\,dx\right)\nonumber\\%%
&\qquad\leq \lambda_2\sup_{t\geq 0}|p_0(t)|+\lambda_1\lambda_{2}L\sup_{t\geq 0}|p_0(t)|\nonumber\\%%%
&\qquad= (\lambda_2+\lambda_1\lambda_{2}L)\sup_{t\geq 0}|p_0(t)|. \label{EST_mu_p2}
\end{align}
Combining \eqref{EST_diff_p0} with \eqref{EST_mu_p1}--\eqref{EST_mu_p2} yields
\begin{align}
|p_0(t+h)-p_0(t)|
&\leq  \Big(\sum_{i=1}^2\lambda_i\Big)  h\sup_{t\geq 0}|p_0(t)| 
+ \lambda_1h\sup_{t\geq 0}|p_0(t)| + \left(\lambda_2+\lambda_1\lambda_{2}L\right)h\sup_{t\geq 0}|p_0(t)|\nonumber\\
&\qquad+(1-e^{-\sum^{2}_{i=1}\lambda_{i}t})  \left( \lambda_1+ \lambda_2+\lambda_1\lambda_{2}L\right)h\sup_{t\geq 0}|p_0(t)|  \nonumber\\
&\leq c_0(\lambda_1, \lambda_2, L) h\sup_{t\geq 0}\|\mathcal{T}\vec{p}_0\|_{X}, \label{2EST_diff_p0}
\end{align}
where $c_0(\lambda_1, \lambda_2, L)=3\sum_{i=1}^2\lambda_i +2\lambda_1\lambda_{2}L$.

For the  second term  in~\eqref{2EST_T}, we obtain 
\begin{align}
\int_0^L|p_1(x,t+h)-p_1(x,t)|dx&\leq\int^L_0 \lambda_1e^{-\int_{0}^x(\mu_1(\tau)+\lambda_{2})d\tau}|p_0(t+h-x)-p_0(t-x)|\,dx\nonumber\\
&=\int^t_{t-L}\lambda_1e^{-\int_{0}^{t-s}(\mu_1(\tau)+\lambda_{2})d\tau}|p_0(s+h)-p_0(s)|\,ds\nonumber\\
&\leq   \lambda_1L c_0(\lambda_1, \lambda_2, L) h \sup_{t\geq 0}\|\mathcal{T}(t)\vec{p}_0\|_{X}, \label{EST_diff_p1}
\end{align}
for $t>2L$. Similarly,   we can verify that 
\begin{align}
&\int_0^L|p_2(x,t+h)-p_2(x,t)|\,dx 
% \leq \lambda_2\int_{t-L}^te^{-\int_{0}^{t-s}\mu_2(\tau)d\tau}|p_0(s+h)-p_0(s)|\,ds\\
%&\qquad+2\lambda_1\lambda_{2}\int_0^Le^{-\int_{0}^x\mu_2(\tau)d\tau}dx\int_{t-2L}^te^{-\int^{\frac{t-s}{2}}_{0}(\mu_1(\tau)+\lambda_{2})d\tau}|p_0(s+h)-p_0(s)|\,ds\\
%&\leq\lambda_2 La_0h\sup_{t\geq 0}\|\mathcal{T}\vec{p}_0\|_{X}+2\lambda_1\lambda_{2} L \cdot 2L a_0h\sup_{t\geq 0}\|\mathcal{T}\vec{p}_0\|_{X} \\
\leq   \lambda_2L(1 +4\lambda_1L)c_0(\lambda_1, \lambda_2, L)h \sup_{t\geq 0}\|\mathcal{T}\vec{p}_0\|_{X}.  \label{EST_diff_p2}
\end{align}

As a result of \eqref{2EST_diff_p0}--\eqref{EST_diff_p2} and   $\|\mathcal{T}(t)\|_{\mathcal{L}(X)}\leq 1$ for $t\geq 0$, we have 
\begin{align*}
\|T(t+h)\vec{p}_0-\mathcal{T}(t)\vec{p}_0\|_{X}&\leq c_1(\lambda_1, \lambda_2, L)h\sup_{t\geq 0}\|\mathcal{T}(t)\|_{\mathcal{L}(X)}\|\vec{p}_0\|_X,\\
&\leq c_1(\lambda_1, \lambda_2, L)h\|\vec{p}_0\|_X\to 0,
\end{align*}
uniformly as $h\to 0$, where  the constant   $c_1(\lambda_1, \lambda_2, L)>0$  is independent of $h$.  Therefore,
\eqref{EST_T} holds and this completes the proof.
\end{proof}  
However,  the eventual compactness of $\mathcal{T}(t)$ does not  hold when $L=\infty$.
\begin{remark}\label{rem1}
Based on condition  \eqref{mu2} and \eqref{sol_p0}--\eqref{sol_p2}, it is clear that 
  \begin{align}
p_1(L,t)=p_2(L,t)=0, \quad \forall t>0. \label{cond_L}
\end{align}
which implies  that the probability density distributions of the system in degraded and failure modes become zero
once the repair time reaches its maximum. 
As a result, one can derive that 
$$ \frac{dp_0(t)}{dt}+\sum^2_{i=1}\frac{\partial \int^L_0p_i(x,t)\,dx}{\partial t}=0, \quad \forall t>0.
$$
Therefore,  if $\vec{p}_0=(\phi_0, \phi_1, \phi_2)^T\geq 0$ and $\|\vec{p}_0\|_{X}=1$, then  by  the positivity of the semigroup $\mathcal{T}(t)$, 
we have  $\vec{p}(x,t)=\mathcal{T}(t)\vec{p}_0\geq 0$  and
\begin{align}
\|\vec{p}(\cdot, t)\|_{X}=\|\vec{p}_0\|_{X}=1, \quad \forall t>0. \label{conservation}
\end{align}
 In other words, our  system  \eqref{IVP} is conservative in terms of $\|\cdot\|_X$-norm.
Moreover,  it is easy to verify that 
 \begin{align}
& \vec{p}(\cdot, t)=\mathcal{T}(t)\vec{p}_0\in D(\mathcal{A})\quad \text{for}\quad t>2x, \label{1reg_sol}\\
\text{and} \quad &p_i(x,t)\in W^{1,1}(0, L) \quad\text{for} \quad t\leq 2 x,\quad \text{if} \quad \phi_i(x)\in W^{1,1}(0, L), \ i=1,2.\label{2reg_sol}
  \end{align}
\end{remark}
Finally, by  eventual compactness and the fact that zero is a simple eigenvalue of the generator $\mathcal{A}$ and the only spectrum on the imaginary axis established in Theorem \ref{thm1} and Proposition \ref{prop1}, 
 the exponential  stability result   follows immediately from  (e.g.~\cite[Cor.\,3.2,p.\,330]{engel2000one}).
\begin{thm}\label{thm3}
For $\vec{p}_0\in X$, let $\vec{p}(\cdot, t)= \mathcal{T}(t)\vec{p}_0$ be the solution to the Cauchy problem \eqref{IVP}, then it  converges exponentially to its  steady-state solution $\vec{p}_{e}=(p_{e0}, p_{e1},p_{e2})^T$ given by \eqref{p_e1}--\eqref{p_e0}, that is,
\begin{align}
\Vert \vec{p}(\cdot,t)-\vec{p}_{e}(\cdot)\Vert_{X} \leq  M_{0}e^{-\varepsilon_{0}t},
\end{align}
for some constant  $\varepsilon_{0}>0$ and $M_{0}\geq 1$.
\end{thm}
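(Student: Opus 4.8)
The plan is to obtain the exponential convergence directly from the spectral structure already in hand, through the spectral decomposition available for eventually compact semigroups. By Proposition~\ref{prop2} the semigroup $\mathcal{T}(t)$ is compact for $t>2L$, hence eventually norm-continuous; combined with the compactness of the resolvent from Proposition~\ref{prop1}(2), this guarantees that $\sigma(\mathcal{A})$ consists solely of isolated eigenvalues of finite algebraic multiplicity and that the spectral mapping theorem $\sigma(\mathcal{T}(t))\setminus\{0\}=e^{t\sigma(\mathcal{A})}$ holds.

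First I would establish a spectral gap. Proposition~\ref{prop1} shows that zero is a simple eigenvalue and the only spectral value on the imaginary axis, since the nonzero imaginary axis is contained in $\rho(\mathcal{A})$ by part~(1). Eventual compactness forces $\sigma(\mathcal{A})\cap\{\mathrm{Re}(r)\geq -\delta\}$ to be finite for every $\delta>0$; as none of these finitely many eigenvalues other than zero sits on the imaginary axis, there exists $\varepsilon_0>0$ with $\mathrm{Re}(r)\leq-\varepsilon_0$ for every nonzero $r\in\sigma(\mathcal{A})$.

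Next I would split the state space using the Riesz projection $P=\frac{1}{2\pi\i}\oint_\gamma \mathcal{R}(r,\mathcal{A})\,dr$ associated with the isolated eigenvalue $0$, where $\gamma$ is a small positively oriented circle enclosing only $0$. Since $0$ is simple, $PX=\ker\mathcal{A}=\operatorname{span}\{\vec{p}_e\}$ is one-dimensional and $X=PX\oplus(I-P)X$ decomposes into $\mathcal{T}(t)$-invariant subspaces. On $PX$ the generator vanishes, so $\mathcal{T}(t)P=P$; for a normalized nonnegative datum the conservation property \eqref{conservation} together with uniqueness of the steady state identifies $P\vec{p}_0=\vec{p}_e$. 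On $X_2=(I-P)X$ the restricted generator $\mathcal{A}_2$ has spectral bound $s(\mathcal{A}_2)\leq-\varepsilon_0$, and as the restricted semigroup inherits eventual norm-continuity, the spectral mapping theorem yields $\omega_0(\mathcal{T}_2)=s(\mathcal{A}_2)\leq-\varepsilon_0$; hence $\|\mathcal{T}(t)(I-P)\|_{\mathcal{L}(X)}\leq M_0 e^{-\varepsilon_0 t}$ for some $M_0\geq 1$.

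Finally, writing $\vec{p}(\cdot,t)-\vec{p}_e=\mathcal{T}(t)\vec{p}_0-P\vec{p}_0=\mathcal{T}(t)(I-P)\vec{p}_0$ and applying the bound above delivers $\|\vec{p}(\cdot,t)-\vec{p}_e\|_X\leq M_0 e^{-\varepsilon_0 t}$. The main obstacle is the spectral-gap step: one must rule out accumulation of the nonzero eigenvalues toward the imaginary axis, which is precisely what eventual compactness prevents. All three ingredients — eventual norm-continuity, zero a simple pole of the resolvent, and no further imaginary spectrum — are exactly the hypotheses packaged in \cite[Cor.\,3.2,\,p.\,330]{engel2000one}, so verifying them suffices to conclude.
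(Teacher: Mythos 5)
You are correct, and your route is essentially the paper's own: the paper proves Theorem~\ref{thm3} by citing \cite[Cor.\,3.2, p.\,330]{engel2000one} with exactly the three ingredients you verify (eventual compactness from Proposition~\ref{prop2}, and the facts that $0$ is a simple eigenvalue and the only imaginary-axis spectrum from Proposition~\ref{prop1}), and your argument simply unpacks the standard proof of that corollary via the Riesz projection and the spectral mapping theorem for eventually norm-continuous semigroups. Your extra care in identifying $P\vec{p}_0=\vec{p}_e$ through positivity and the conservation property \eqref{conservation} addresses a point the paper leaves implicit---for arbitrary $\vec{p}_0\in X$ the limit is really the scalar multiple $P\vec{p}_0$ of $\vec{p}_e$---but this only sharpens, and does not change, the argument.
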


\section{Bilinear controllability  via time-dependent repair actions}\label{sec_control}
In this section, we will focus on the  investigation of bilinear controllability of  system \eqref{IVP} 
via system repair actions.
 Our main objective is stated as follows. 

\textbf{Problem Statement.}
Given $t_f>0$, let  $\vec{p}_0(\cdot)=(\phi_{0}, \phi_{1}(\cdot),\phi_{2}(\cdot))^T\in X$ and  $\vec{p}^*(\cdot)=(p^*_{0}, p^*_{1}(\cdot),p^*_2(\cdot))^{T}\in X$ be  non-negative  initial and desired states, respectively,  with $\|\vec{p}_0\|_{X}=\|\vec{p}^*\|_{X}=1$.  
 Find a vector of  space-time dependent repair rates $\vec{\mu}(x,t)=(\mu_1(x, t), \mu_2(x,t))^T$  such that the solution $\vec{p}(\cdot,t)$ to  system \eqref{IVP} satisfies 
 $\vec{p}(x,t_f)=\vec{p}^*(x)$ for $x\in [0, L]$.
 
However the desired states can not be arbitrary. Due to the properties of non-negativity and conservation of the system, the desired states should also satisfy these attributes described by \eqref{cond_L}--\eqref{2reg_sol} and  the boundary conditions \eqref{sys_BC_D}--\eqref{sys_BC_F}. In addition, we assume that the desired probability density distribution of the system in degraded and failure modes are strictly decreasing functions. In other words, while under repair, it is not expected that the desired density distributions of these two modes  increase.  Specifically, 
 we assume that 
  \begin{align}
\vec{p}^*=(p^*_0, p^*_1, p^*_2)^T\in \mathbb{R}\times W^{1,1}(0, L)\times W^{1,1}(0, L)\label{1desired_p}
\end{align}
and
\begin{align}
&p^*_1(0)=\lambda p^*_0, \quad
p^*_2(0)=\lambda_{2} p^*_0+\lambda_{2}\int^{L}_{0} p^*_{1}(x)\, dx \label{2desired_p}\\
& p^*_i(L)=0, \quad i=1,2, \label{3desired_p}
\end{align}
where 
\begin{align}
p^*_0=1-\sum^2_{i=1}\int^L_0p^*_i(x)\,dx.\label{desired_p0}
\end{align}
Moreover, 
\begin{align}
p^*_i(x)&\geq 0, \quad i=1,2, \quad \forall x\in [0, L],   \label{4desire_p}\\
  \frac{d p^*_1(x)}{dx}&\leq -\epsilon<0 \quad \text{and}\quad  \frac{d p^*_2(x)}{dx}<0 , \quad \forall x\in (0, L)  \label{5desire_p}
\end{align}
for some $\epsilon>0$.
Observe that if the repair rates are time-independent,  then setting  the steady-state solution given by \eqref{p_e1}--\eqref{p_e0} to be the desired distribution, that is, letting
\begin{align}
&p^*_{0}=p_{e0}\nonumber \\
&p^*_{1}(x)=\lambda_1p_{e0}e^{-\int^{x}_{0}(\mu_1(s)+\lambda_{2})\,ds}, \label{eq_mu1}\\
&p^*_{2}(x) =e^{-\int^{x}_{0}\mu_2(\alpha)\, ds}\left(\lambda_{2}p_{e0}+\lambda_{2}\lambda_1p_{e0}\int^{L}_{0} \left(e^{-\int^{x}_{0}(\mu_1(s)+\lambda_{2})\,ds}\right)\, dx\right),  \label{eq_mu2}
\end{align}
we can obtain from  \eqref{eq_mu1}--\eqref{eq_mu2} that
\begin{equation}
\mu_1(x)=-(\frac{d }{dx}\left(\ln{p^*_{1}(x)}\right)+\lambda_{2})=-\left(\frac{p^*_{1_x}}{p^*_{1}}+\lambda_{2}\right)
 \label{mu1_e}
\end{equation}
%where $p_{f1}(0)=\lambda_1p_{e0}$ 
and  
\begin{equation}
\mu_2(x)=-\frac{d }{dx}\left(\ln{p^*_{2}(x)}\right)=-\frac{p^*_{2_x}}{p^*_{2}},  \label{mu2_e}
\end{equation}
%where
%\[
%p_{f2}(0)=\left(\lambda_{2}p_{e0}+\lambda_{2}\int^{L}_{0}p_{f1}(x) dx\right).
%\]
which satisfy  \eqref{mu1}--\eqref{mu2}. This observation implies that if the repair 
rates are set as  \eqref{mu1_e}--\eqref{mu2_e}, then the system solution converges
to $\vec{p}^*$ exponentially. 
  On the other hand, one may possibly adjust  the repair rates in time to steer the system to such a state at some final time. 
  This motivates us to consider the space-time dependent repair rate design in the following section. 
  \subsection{Bilinear Control Design of the repair actions }
   Now  we investigate the bilinear controllability of the repair actions when they are allowed to depend on system running time $t$. Note that for any $t_f>0$ we can always choose a constant $r_0>0$ such that 
  $r_0\sum^{\infty}_{k=1} \frac{1}{k^2}=t_f$.  In fact,  $\sum^{\infty}_{k=1} \frac{1}{k^2}=\frac{\pi^2}{6}$ and hence $r_0=\frac{6t_f}{\pi^2}$. %Without loss of generality, we let $t_f=\sum^{\infty}_{k=1} \frac{1}{k^2}$.  
 Inspired by \cite{elamvazhuthi2017controllability, elamvazhuthi2018bilinear}, we consider
 the space-time dependent repair rates $\mu_i(x, t), i=1,2, $ in the following feedback forms 
 \begin{align}
&\mu_{1}(x,t)=-\frac{1}{p_1(x,t)}\frac{\partial{p_1(x,t)}}{\partial{x}}+\alpha_1j\frac{1}{p_1(x,t)}\frac{\partial{(g_1(x)p_1(x,t))}}{\partial{x}}-\lambda_2, \label{feedback_mu1}\\
&\mu_{2}(x,t)=-\frac{1}{p_2(x,t)}\frac{\partial{p_2(x,t)}}{\partial{x}}+\alpha_2 j\frac{1}{p_2(x,t)}\frac{\partial{(g_2(x)p_2(x,t))}}{\partial{x}}, \label{feedback_mu2}
\end{align}
 for $ t\in [r_0\sum^{j-1}_{k=1} \frac{1}{k^2}, r_0\sum^{j}_{k=1} \frac{1}{k^2})$ and $i\in \mathbb{Z}^{+}$, where $g_i(x)=\frac{1}{p^*_i(x)}, i=1,2$
  and $\alpha_i >0, i=1,2$  are some constants to be properly chosen.   Here   we set $ \sum^{j}_{k=1} \frac{1}{k^2}=0$ if $j=0$ 
  and let $t_j=r_0\sum^{j}_{k=1} \frac{1}{k^2}, j\in \mathbb{Z}^+$, in the rest of our discussion. Observe  that in \eqref{feedback_mu1}--\eqref{feedback_mu2}, $ \mu_i, i =1,2,$ are weighted in time $t$ by $j$ on each interval 
  $[t_j, t_{j+1})$ and it is straightforward to verify that 
$\mu_i\geq 0$  if $\alpha_1 \geq \max\{\frac{\lambda_2}{\epsilon}p^{*2}_1(0),  p^*_1(0)\}$, for all $t\geq 0$ and $\alpha_2 \geq p^*_{2}(0)$.  
%====detailed proof===
In fact, to have $\mu_1(x,t)\geq0$ for $x\in [0, L]$ and $t\geq0$,  we need
 \begin{align*}
\alpha_1j\frac{1}{p_1(x,t)}\frac{\partial{(g_1(x)p_1(x,t))}}{\partial{x}}\geq \frac{1}{p_1(x,t)}\frac{\partial{p_1(x,t)}}{\partial{x}}+\lambda_2,\\
\end{align*}
where
$$\alpha_1j\frac{1}{p_1(x,t)}\frac{\partial{(g_1(x)p_1(x,t))}}{\partial{x}}%=\alpha_1 j\frac{g_x(x,t) p_1(x,t)+g(x,t)p_{1_x}(x,t)}{p_1(x,t)}
=\alpha_1 j g_{1_x}(x)+\alpha_1 j g_1(x)\frac{p_{1_x}(x,t)}{p_1(x,t)}$$
and
$$g_{1_x}(x)=-\frac{p^*_{1_x}(x)}{p^{*2}_1(x)}\geq \epsilon >0.$$
Thus it suffices to have
$\alpha_1 j g_{1_x}(x)\geq \lambda_2$
and
$\alpha_1 jg(x)\frac{p_{1_x}(x,t)}{p_1(x,t)}\geq\frac{p_{1_x}(x,t)}{p_1(x,t)}.$
Since $j\geq 1$, it suffices to choose    $\alpha_1 \geq \max\{ \sup_{x\in[0, L]}\frac{\lambda_2}{g_{1_x}(x)},  \sup_{x\in[0, L]}\ \frac{1}{g_1(x)}\}$, 
i.e.,
$$\alpha_1 \geq \max\{\sup_{x\in[0, L]}\big(- \lambda_2\frac{p^{*2}_1(x)}{p^*_{1_x}}\big),  \sup_{x\in[0, L]} p^*_1(x)\},$$
therefore, we take 
\begin{align}
\alpha_1\geq \max\{\frac{\lambda_2}{\epsilon} p^{*2}_1(0),  p^*_1(0)\}. \label{EST_alpha1}
\end{align}
Similarly, we set
\begin{align}
\alpha_2\geq p^*_2(0). \label{EST_alpha2}
\end{align}

The assumptions \eqref{mu1}--\eqref{mu2} will be verified for  after investigating the properties of the closed-loop system.
The following two theorems establish  the main controllability results of this work.
%====detailed proof===
\begin{thm}\label{thm_main1}
Given $t_f>0$, let  $\vec{p}_0=(\phi_{0}, \phi_{1}(\cdot),\phi_{2}(\cdot))^T\in X$ and  $\vec{p}^*(\cdot)=(p^*_{0}, p^*_{1}(\cdot),p^*_2(\cdot))^{T}\in X$ be  non-negative  initial and desired states, respectively,  with $\|\vec{p}_0\|_{X}=\|\vec{p}^*\|_{X}=1$.  Assume that $\vec{p}^*$ satisfies \eqref{1desired_p}--\eqref{5desire_p}. Then
there exists a vector of  repair rates $\vec{\mu}(x,t)=(\mu_1(x,t),\mu_2(x,t))^T$ in the feedback forms given by \eqref{feedback_mu1}--\eqref{feedback_mu2},
such that  the  solution  $\vec{p}(\cdot, t)$ to our system \eqref{IVP} satisfies $\vec{p}(\cdot, t_f)=(p^*_0, p^*_1(\cdot ), p^*_2(\cdot ))^T$.
\end{thm}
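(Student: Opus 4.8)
The plan is to exploit the fact that the feedback laws \eqref{feedback_mu1}--\eqref{feedback_mu2} are tailored so that the advection in each transport equation is cancelled and replaced by a flux whose only equilibrium is the target. Substituting \eqref{feedback_mu1} into the $p_1$-equation of \eqref{system}, the source $-(\mu_1+\lambda_2)p_1$ collapses to $\partial_x p_1-\alpha_1 j\,\partial_x(g_1 p_1)$, so that on each interval $[t_{j-1},t_j)$ the closed loop reduces to the linear continuity equations
\begin{equation}
\frac{\partial p_i}{\partial t}=-\alpha_i j\,\frac{\partial}{\partial x}\big(g_i(x)\,p_i\big),\qquad i=1,2,
\label{plan:cont}
\end{equation}
and similarly for $p_2$ via \eqref{feedback_mu2}; note that the factor $1/p_i$ cancels, so the closed loop is genuinely linear in $\vec{p}$. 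Because $g_i=1/p_i^*$ gives $g_i p_i^*\equiv 1$, we have $\partial_x(g_i p_i^*)=0$ and hence $p_i^*$ is stationary for \eqref{plan:cont}. I would then record that $\vec{p}^*$ is the unique equilibrium of the full closed loop: evaluating \eqref{feedback_mu1}--\eqref{feedback_mu2} at $p_i=p_i^*$ annihilates the middle term and recovers exactly the steady rates \eqref{mu1_e}--\eqref{mu2_e}, which balance the $p_0$-equation. Finally, admissibility must be checked, namely $\mu_i(\cdot,t)\ge 0$ for all $t$ under the choices \eqref{EST_alpha1}--\eqref{EST_alpha2}, together with the singularity conditions \eqref{mu1}--\eqref{mu2} inherited from \eqref{mu1_e}--\eqref{mu2_e} through \eqref{3desired_p}, \eqref{5desire_p} and \eqref{mu3}.

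Next I would establish that the closed loop converges to $\vec{p}^*$. Writing $w_i=p_i/p_i^*=g_i p_i$ turns \eqref{plan:cont} into the pure transport equation $\partial_t w_i+c_i(x)\,\partial_x w_i=0$ with strictly positive speed $c_i(x)=\alpha_i j/p_i^*(x)$ and equilibrium $w_i\equiv 1$. On each interval the spatial operator is fixed up to the scalar weight $j$, so, mirroring Theorem \ref{thm1}, the closed-loop generator $\mathcal{A}_{\mathrm{cl}}$ generates a positive contraction $C_0$-semigroup on $X$, whose mild solution can be written by the method of characteristics; since $L<\infty$, the crossing-time estimate of Proposition \ref{prop2} again gives eventual compactness, and one checks as in Proposition \ref{prop1} and Theorem \ref{thm3} that $\vec{p}^*$ is a simple zero eigenvalue of $\mathcal{A}_{\mathrm{cl}}$ and the only spectrum on the imaginary axis. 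This yields exponential convergence of the base ($j=1$) dynamics to $\vec{p}^*$.

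I would then convert this infinite-horizon decay into exact convergence at the finite time $t_f$ using the time weighting. On $[t_{j-1},t_j)$ the interval has length $r_0/j^2$ while the repair speed carries the factor $j$; a characteristic of the transport equation for $w_i$ thus advances, in units of the finite crossing length $\tfrac1{\alpha_i}\int_0^L p_i^*\,dx$, by an amount proportional to $r_0/j$ on interval $j$, so the cumulative advance up to $t_f$ is proportional to $r_0\sum_{j\ge1}1/j=\infty$ while the total elapsed time $r_0\sum_{j\ge1}1/j^2=t_f$ remains finite. Consequently the accelerated flushing drives $\|\vec{p}(\cdot,t)-\vec{p}^*\|_X\to 0$ as $t\to t_f^-$, and extending the solution continuously by $\vec{p}(\cdot,t_f):=\vec{p}^*$ gives $\vec{p}(\cdot,t_f)=(p_0^*,p_1^*(\cdot),p_2^*(\cdot))^T$, as claimed.

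I expect the main obstacle to be the convergence step, not the algebraic reduction. The transport inflow at $x=0$ is not prescribed but is tied to $p_0(t)$ (and to $\int_0^L p_1\,dx$) through the boundary conditions \eqref{sys_BC_D}--\eqref{sys_BC_F}, so the closed loop is a genuinely coupled integro-transport system whose generator does not simply rescale by $j$; its exponential stability must therefore be proved directly, e.g.\ through the eventual-compactness and spectral route above combined with the conservation identity $p_0-p_0^*=-\sum_i\int_0^L(p_i-p_i^*)\,dx$ that slaves $p_0$ to the densities. A second difficulty is the singular speed $c_i(x)=\alpha_i/p_i^*(x)$ at $x=L$, where $p_i^*(L)=0$ by \eqref{3desired_p}; this is controlled by the finite crossing time $\int_0^L p_i^*(x)\,dx/\alpha_i<\infty$ and the monotonicity \eqref{5desire_p}. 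Finally, one must justify rigorously the accumulation over the infinitely many intervals collapsing at $t_f$, ensuring that the contraction toward $\vec{p}^*$ is not spoiled by the prefactor in the exponential estimate.
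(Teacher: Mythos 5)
Your proposal is correct and takes essentially the same route as the paper: the same reduction to the linear closed-loop continuity system with equilibrium $\vec{p}^*$, exponential stability of the unweighted ($j=1$) dynamics via positivity, contraction, eventual compactness and spectral analysis (Propositions \ref{prop_closedd_loop}--\ref{prop3} and Corollary \ref{cor2}), and then the $j$-weighting combined with the divergent harmonic series to force $\vec{p}(\cdot,t_f)=\vec{p}^*$ (the paper's estimate \eqref{EST_key}). One remark that dissolves both obstacles you flag at the end: since the boundary and integral couplings sit in the domain $D(\mathcal{A}_c)$ while every term of the closed-loop action carries the factor $j$, the weighted generator is exactly $j\mathcal{A}_c$, so the evolution on $[t_{j-1},t_j)$ is $\mathcal{T}_c(j(t-t_{j-1}))$ and the composite evolution up to $t_j$ is $\mathcal{T}_c\big(r_0\sum_{k=1}^{j}\tfrac{1}{k}\big)$ by the semigroup law --- hence exponential stability need only be established once (for $j=1$), and a single prefactor $M_c$ suffices, with no accumulation of powers $M_c^{\,j}$ across the infinitely many switching intervals.
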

Based on the feedback control designs \eqref{feedback_mu1}--\eqref{feedback_mu2},  a natural question is that wether 
$\vec{\mu}(x,t)$ stays bounded  as $j\to\infty$, i.e., $t\to t_f$, where  $x\in [0, l]$ with $0<l<L$. Our answer is affirmative under appropriate  conditions. 
\begin{thm}\label{thm_main2}
Let   
\begin{align}
 \alpha_1\geq \max\{\frac{\lambda_2}{\epsilon} p^{*2}_1(0),  p^*_1(0), \frac{1}{r_0}\}\quad \text{and }\quad
\alpha_2\geq \max\{p^*_2(0), \frac{1}{r_0}\}. \label{new_alpha}
\end{align}
If   $p^*_{i}(x)\in W^{1, \infty}(0, L)$, $i=1,2$, and $t_f > 2\sum^2_{i=1} \tilde{p}^*_{i}(L)$, then the feedback control law $\vec{\mu}(x, t)$ is bounded  for $x\in [0, l]$ with $0<l<L$ and $0\leq t\leq t_f$.
\end{thm}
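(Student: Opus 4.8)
The plan is to show that, although each feedback rate $\mu_i$ carries the growing weight $j\to\infty$, this explicit factor is cancelled by the closed-loop dynamics, after which boundedness follows from (i) a uniform positive lower bound on $p_i$ away from $x=L$ and (ii) uniform-in-$j$ control of the space- and time-derivatives of the normalized density $w_i:=p_i/p^*_i=g_ip_i$.

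First I would record the closed-loop equation. Substituting \eqref{feedback_mu1}--\eqref{feedback_mu2} into the transport equations of \eqref{system} collapses the advection term and yields, on each interval $[t_j,t_{j+1})$,
\[
\partial_t p_i=-\alpha_i j\,\partial_x(g_ip_i),\qquad i=1,2,
\]
equivalently $\partial_t w_i=-(\alpha_i j/p^*_i)\,\partial_x w_i$. The key algebraic step is to rewrite the feedback laws purely through $w_i$: using $\alpha_i j\,\partial_x(g_ip_i)/p_i=\alpha_i j\,\partial_x w_i/(p^*_iw_i)=-\partial_t w_i/w_i$, one gets
\[
\mu_1=-\partial_x\ln p^*_1-\partial_x\ln w_1-\partial_t\ln w_1-\lambda_2,\qquad \mu_2=-\partial_x\ln p^*_2-\partial_x\ln w_2-\partial_t\ln w_2 .
\]
The explicit factor $j$ has disappeared; the term $-\partial_x\ln p^*_i$ is bounded on $[0,l]$ since $p^*_i\in W^{1,\infty}$ and, by \eqref{5desire_p}, $p^*_i(x)\ge\epsilon(L-l)>0$ there. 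Hence everything reduces to bounding $\partial_x\ln w_i$ and $\partial_t\ln w_i$ uniformly in $j$ on $[0,l]\times[0,t_f]$.

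Second, I would rectify the transport by the monotone change of variable $y=P^*_i(x):=\int_0^x p^*_i(s)\,ds$, turning the closed-loop equation into the constant-speed transport $\partial_t w_i+\alpha_i j\,\partial_y w_i=0$; thus $w_i$ and $\partial_y w_i$ are merely translated along the characteristics $\dot y=\alpha_i j$. Following a characteristic that carries boundary data emitted at time $s$ from $y=0$ in interval $j_{\mathrm{emit}}$ and observed in interval $j_{\mathrm{obs}}$, a direct computation from $y=\alpha_i\int_s^t j(\tau)\,d\tau$ gives $\partial_y w_i=-\tfrac{1}{\alpha_i j_{\mathrm{emit}}}\tfrac{d}{ds}w_i(0,s)$, hence
\[
\partial_t w_i=-\alpha_i j_{\mathrm{obs}}\,\partial_y w_i=\frac{j_{\mathrm{obs}}}{j_{\mathrm{emit}}}\,\frac{d}{ds}w_i(0,s).
\]
The crux is that the ratio $j_{\mathrm{obs}}/j_{\mathrm{emit}}$ stays uniformly bounded on $[0,l]$: the characteristic only covers the finite $y$-budget $P^*_i(l)<P^*_i(L)$, so $\sum_{k=j_{\mathrm{emit}}}^{j_{\mathrm{obs}}}1/k\approx P^*_i(l)/(\alpha_i r_0)$, whence $\ln(j_{\mathrm{obs}}/j_{\mathrm{emit}})\lesssim P^*_i(l)/(\alpha_i r_0)\le P^*_i(l)$ by $\alpha_i\ge 1/r_0$ from \eqref{new_alpha}. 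This is exactly where $l<L$ is indispensable: at $x=L$ the $y$-budget equals the full $P^*_i(L)$ and the ratio can blow up, with $j_{\mathrm{emit}}$ staying at $1$ while $j_{\mathrm{obs}}\to\infty$.

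Third, I would close the estimates. For the lower bound, since the chosen $\alpha_i$ make $\mu_i\ge 0$, the good-state ODE gives $\dot p_0\ge-(\lambda_1+\lambda_2)p_0$, so $p_0(t)\ge\phi_0 e^{-(\lambda_1+\lambda_2)t_f}>0$; transporting the boundary values $w_1(0,t)=p_0(t)/p^*_0$ and $w_2(0,t)=p_2(0,t)/p^*_2(0)$ along characteristics and using $p^*_i(x)\ge\epsilon(L-l)$ yields $p_i,w_i\ge c_l>0$ on $[0,l]\times[0,t_f]$, so $1/w_i$ is bounded. For the Lipschitz continuity of $w_i(0,\cdot)$—needed to bound $\tfrac{d}{ds}w_i(0,s)$—I would use conservation (Remark \ref{rem1}) in the form $\dot p_0=\sum_i\alpha_i j\,(w_i(L,t)-w_i(0,t))$ together with the same traversal analysis applied at $x=L$; the horizon hypothesis $t_f>2\sum_i\tilde p^*_i(L)$ guarantees the full domain is traversed and the emission intervals grow, so that $\alpha_i j\,(w_i(L,\cdot)-w_i(0,\cdot))$ stays bounded and a short Grönwall/bootstrap yields $p_0\in W^{1,\infty}(0,t_f)$ and hence $w_i(0,\cdot)\in W^{1,\infty}$. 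Finally, the same traversal bound shows the influence of the (possibly rough) initial data $\phi_i$ is flushed past $x=l$ before $t_f$, so only the controlled boundary data governs the uniform estimate; assembling the pieces in the two displayed formulas for $\mu_i$ gives the claim.

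I expect the main obstacle to be the self-referential $p_0$-dynamics: the boundary data $w_i(0,\cdot)$ feeds the transport, but its time-regularity is itself governed by the boundary flux at $x=L$ through $\dot p_0$, precisely where the traversal-ratio estimate is most delicate. Decoupling this via the conservation identity and the horizon condition—rather than through the $x=L$ limit of $g_ip_i$ directly, which is a $0/0$ indeterminacy—is the technical heart of the argument.
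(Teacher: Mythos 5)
Your reduction is sound up to a point, and it in fact parallels the paper's own argument: rewriting $\mu_i$ through $w_i=g_ip_i$, rectifying the transport by $y=P^*_i(x)=\int_0^x p^*_i(s)\,ds$, and following characteristics reduces everything to a uniform-in-$j$ bound on $\sup_t|\dot p_{0,j}(t)|$ (this is exactly the paper's \eqref{EST_diff_p0_bd}); your traversal bookkeeping, i.e.\ $\tilde{p}^*_{i,j}(L)\le r_0/j$ and the boundedness of the ratio $j_{\mathrm{obs}}/j_{\mathrm{emit}}$, is also correct and appears in the paper. The genuine gap is in how you close that last bound. Your plan is self-referential: from $\dot p_{0,j}=\sum_i\alpha_i j\,(w_i(L,t)-w_i(0,t))$ and $w_i(L,t)=w_i(0,s)$ with delay $t-s\approx P^*_i(L)/(\alpha_i j)$, the Lipschitz estimate of the boundary data gives $|\dot p_{0,j}(t)|\le \theta\,\sup_{s\le t}|\dot p_{0,j}(s)|+(\text{data})$, where the explicit factor $j$ exactly cancels the $O(1/j)$ delay and leaves an order-one constant $\theta$. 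This is not a Gr\"onwall inequality --- there is no integral gain in time, because the delay shrinks like $1/j$ --- and absorbing the right-hand side requires $\theta<1$, which is not guaranteed: computing $\theta$ from $w_1(0,t)=p_0(t)/p^*_0$ and $w_2(0,t)=\big(p_0(t)+\int_0^Lp_1\,dx\big)/\big(p^*_0+\int_0^Lp^*_1\,dx\big)$, one finds (even with traversal ratios close to $1$) $\theta\approx(1-p^*_0)/p^*_0$, which exceeds $1$ whenever the desired good-state probability satisfies $p^*_0<1/2$. So the ``short Gr\"onwall/bootstrap'' you invoke for $p_0\in W^{1,\infty}$ does not close in general.

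What the paper does instead --- and what is missing from your proposal --- is to bound the flux difference not by the time-Lipschitz constant of the boundary data (self-referential) but by the distance to the steady state: $|w_i(L,t)-w_i(0,t)|\lesssim\|\vec p(\cdot,\,t-2\tilde{p}^*_{i,j}(L))-\vec p^*\|_X$, and then to invoke the exponential stability of the closed-loop semigroup (Corollary \ref{cor2}) with the accumulated, $j$-weighted decay \eqref{EST_key}, which gives $\|\vec p(\cdot,t)-\vec p^*\|_X\lesssim e^{-\varepsilon_c r_0\sum_{k=1}^{j-1}1/k}\sim j^{-\varepsilon_c r_0}$. The linear growth of the weight $j$ is then beaten precisely when $\varepsilon_c r_0\ge 1$; this is where the Euler--Mascheroni constant enters, where the extra requirements $\alpha_i\ge 1/r_0$ in \eqref{new_alpha} are used, and why the paper notes that $\varepsilon_c\ge 1/r_0$ can be arranged by enlarging $\alpha_1,\alpha_2$. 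Your proposal never uses the closed-loop exponential stability at all, and without a decay mechanism of this polynomial-in-$j$ type there is nothing to defeat the factor $j$: the conservation identity and the horizon condition $t_f>2\sum_i\tilde{p}^*_{i}(L)$ (which only guarantee that the boundary-determined branch of the characteristics is active) do not supply it.
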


 \subsection{Proofs of Theorems \ref{thm_main1}--\ref{thm_main2}}
The proofs of Theorems \ref{thm_main1}-- \ref{thm_main2} mainly utilize  the exponential convergence of  the closed-loop system   to its steady-state. To start with,  incorporating the  feedback laws in \eqref{feedback_mu1}--\eqref{feedback_mu2},  we obtain the closed-loop system as follows
\begin{eqnarray}
\label{closed_loop}
\left\{\begin{array}{l}
\displaystyle\frac{d p_{0,j}(t)}{d t} =j\sum^2_{i=1}\alpha_i \int_0^{L} \frac{\partial{(g_i(x)p_{i, j}(x,t))}}{\partial{x}}\,dx, \\
\displaystyle\frac{\partial{p_{i,j}(x,t)}}{\partial{t}} =-j\alpha_i\frac{\partial{(g_i(x)p_{i,j}(x,t))}}{\partial{x}}, \quad i=1,2,\\
%\displaystyle\frac{\partial{p_{2,j}(x,t)}}{\partial{t}} =-j\alpha_2\frac{\partial{(g_2(x)p_{2,j}(x,t))}}{\partial{x}},
\end{array}\right. 
\end{eqnarray}
where  $p_{0,j}(t):=p_{0}(t)$ and $p_{i,j}(x,t):=p_i(x,t)$ for $i=1,2$,  and $(x,t)\in(0,L)\times(t_{j-1}, t_j)$ for $j\in \mathbb{Z}^+$,  
with  boundary conditions 
\begin{align}
&p_{1,j}(0,t)=\lambda_1 p_{0, j}(t), \label{closed_loop_BC1}\\
& p_{2,j}(0,t)=\lambda_{2} p_{0,j}(t)+\lambda_{2}\int^{L}_{0} p_{1,j}(x, t)\, dx,\label{closed_loop_BC2}
\end{align}
and the initial conditions 
\begin{align}
 &p_{0,j}(0)=p_{0,j-1}(t_{j-1}), \quad p_{i,j}(x,0)=p_{i,j-1}(x, t_{j-1}), \quad i=1,2.
 \label{closed_loop_IC} 
\end{align}

We first apply the method of characteristics to analyze the transport equations in  \eqref{closed_loop}. To this end, we let $\varphi_{0, j}(t)=p_{0, j}(t)$ and $\varphi_{i, j}(x,t)=j \alpha_i g_i(x)p_{i, j}(x, t)$ for $ t\in (t_{j-1}, t_j)$ and $j\in \mathbb{Z}^+$. Then  the closed-loop system
 \eqref{closed_loop}--\eqref{closed_loop_IC} becomes
\begin{eqnarray}
\left\{\begin{array}{l}
\displaystyle\frac{d \varphi_{0, j}(t)}{d t}=\sum^2_{i=1} \int_0^{L} \frac{\partial{\varphi_{i, j}}(x,t)}{\partial x} \,d x,\\
\frac{\partial{\varphi_{i, j}(x,t)}}{\partial{t}}  =-j \alpha_i  g_i(x) \frac{\partial \varphi_{i, j}(x,t) }{\partial x},  \quad i=1,2,
\end{array}\right. 
\label{2closed_loop}
\end{eqnarray}
with boundary conditions
\begin{align}
\varphi_{i}(0,t)&=j \alpha_1g_1(0)p_{i, j}(0,t), \quad i=1,2,\label{closed_loop_BC_varphi}
%\varphi_{1}(0,t)&=j \alpha_1g_1(0)p_{1, j}(0,t)=j \alpha_1g_1(0)\lambda_{1}p_{0, j}(t)=j \alpha_1g_1(0)\lambda_1  \varphi_{0, j}(t),\label{closed_loop_BC_varphi1}\\
%\varphi_{2}(0,t)&=j \alpha_2g_2(0)p_{2, j}(0,t)%=j \alpha_2g_2(0)(\lambda_{2} p_{0, j}(t)+\lambda_{2}\int^{L}_{0} p_{1, j}(x, t)\, dx)\\
%&=j \alpha_2 g_2(0)\lambda_{2}\left( \varphi_{0, j}(t)+\frac{1}{j\alpha_1}\int^{L}_{0} \frac{\varphi_{1, j}(x, t)}{g_1(x)}\,dx\right),
%\label{closed_loop_BC_varphi2}
\end{align}
and initial conditions
\begin{align}
\varphi_{0, j}(0)=p_{0, j}(0), \quad
 \varphi_{i, j}(x,0)=j \alpha_i g_i(x)p_{i, j}(x, 0).
\label{closed_loop_IC_varphi}
\end{align}

Let  $\frac{dx}{dt}=j\alpha_i  g_i(x)$ with $x(0)=x_0.$
Then
 $\frac{dx}{\alpha_i  g_i(x)}=\frac{1}{j\alpha_i }p^*_{i}(x)\,d x = dt.$
 Let 
 \begin{align}
 \tilde{p}^*_{i,j }(x)=\frac{1}{j\alpha_i }\int^x_0 p^*_{i}(s)\,ds, i=1,2.\label{cha_CL}
 \end{align}
  Then  $\tilde{p}^*_{i, j}(x)= t+\frac{1}{j\alpha_i }\int^{x_0}_0 p^*_{i}(s)\,ds$. Since
$ \frac{ d\tilde{p}^*_{i, j}}{dx}=\frac{1}{j\alpha_i } p^*_{i}>0$ for $ x\in (0, L)$ 
 by \eqref{4desire_p}, this  implies that  $\tilde{p}^*_{i, j}(x)$ is  a  monotonically  increasing  function for $ x\in [0, L]$, and hence invertible. 
It is worth to point out that by \eqref{cha_CL}, \eqref{5desire_p} and \eqref{EST_alpha1}--\eqref{EST_alpha2} we have
 \begin{align*}
 \tilde{p}^*_{i,j }(L)=\frac{1}{j\alpha_i }\int^L_0 p^*_{i}(s)\,ds\leq L, \quad j\in \mathbb{N}^+,  \quad i=1,2.% \label{EST_cha_CL}
 \end{align*}
 Now let 
$  \xi_{i,j}=\tilde{p}^*_{i, j}(x)- t$. Then $x=(\tilde{p}^*_{i, j})^{-1} (t+\xi_{i,j}). $
 Define $$\Psi_{i}(t)=\varphi_{i}((\tilde{p}^*_{i})^{-1}(  t+\xi_{i,j}), t).$$  Then  we have 
\begin{align}
\frac{d\Psi_{i,j}}{dt}=\frac{\partial \varphi_{i, j}}{\partial t}+ \alpha_i g_i(x) \frac{\partial \varphi_{i,j}}{\partial x}
=0.\label{eq_psi}
\end{align}

For  $\xi_i<0$, i.e., $\tilde{p}^*_{i, j}(x)<  t$, the solution to \eqref{eq_psi} is determined by the boundary conditions \eqref{closed_loop_BC_varphi}, so we integrate \eqref{eq_psi} from some $t$ 
such that $x=(\tilde{p}^{*}_{i,j})^{-1}(t+\xi_{i,j})=0$, i.e., $t+\xi_i=0$, and hence $t=-\xi_{i,j}$. Integrating   \eqref{eq_psi}   from 
$-\xi_{i,j}$ to $t$  follows
\begin{align}
\Psi_{i, j}(t)&=\varphi_{i,j}(0, -\xi_{i,j})=\varphi_{i, j}(0,t -\tilde{p}^*_{i, j}(x))), \quad i=1,2.
%\nonumber\\
%&=j\alpha_1 g_1(0)\lambda_1\phi_{0,j}\left(  t-\tilde{p}^*_{1,j}(x)\right)
 \label{EST_phi}
\end{align}
%and 
%\begin{align}
%\Psi_{2,j}(t)&=\phi_{2,j}(0, -\xi_1)=\phi_{2,j}(0, -\tilde{p}^*_{2,j}(x)+ t))
%\nonumber\\
%&= j\alpha_2 g_2(0)\lambda_{2} \phi_{0, j}(t-\tilde{p}^*_{2,j}(x))+\frac{\alpha_2g_2(0)\lambda_{2}}{\alpha_1}\int^{L}_{0} \frac{\phi_{1}(x, t-\tilde{p}^*_{2,j}(x))}{g_1(x)}\,dx. \label{EST_phi2}
%\end{align}

For  $\xi_{i,j}\geq 0$, i.e., $\tilde{p}^*_{i, j}(x)\geq   t$, the solution to \eqref{eq_psi} is determined by the initial  conditions \eqref{closed_loop_IC_varphi}. So we integrate  \eqref{eq_psi}  from $0$ to $t$  and obtain 
\begin{align}
\Psi_{i, j}(t)&=\varphi_{i}((\tilde{p}^*_{i, j})^{-1}(\xi_{i,j}), 0)\nonumber\\
&=\varphi_{i, j}\big((\tilde{p}^*_{i, j})^{-1}(\tilde{p}^*_{i, j}(x)-  t), 0\big). \label{2EST_phi1}
\end{align}
To simplify the notation, we let $$h_{i,j}(x, t)=(\tilde{p}^*_{i, j})^{-1}(\tilde{p}^*_{i, j}(x)- t), \quad    \tilde{p}^*_{i, j}(x)\geq t. $$
 Therefore, according to the boundary and initial conditions \eqref{closed_loop_BC_varphi}--\eqref{closed_loop_IC_varphi}, we have
 %====generic expression=======
\begin{align*}
\varphi_{i, j}(x, t)=
\left\{\begin{array}{c}
\displaystyle 
j \alpha_{i}g_i(0) p_{i, j}(0, t -\tilde{p}^*_{i, j}(x)), \quad\text{if} \quad t>\tilde{p}^*_{i, j}(x);\\
j \alpha_i g_i(h_{i, j}(x, t))p_{i, j}(h_{i, j}(x,t), 0), \quad \text{if} \quad   t\leq  \tilde{p}^*_{i, j}(x).
 \end{array}\right. 
\end{align*}
 %====generic expression=======
%  %====detailed expression=======
% \begin{align*}
%\varphi_{1,j}(x, t)=
%\left\{\begin{array}{c}
%\displaystyle 
%j \alpha_{1}g_1(0)\lambda_1  \varphi_{0, j}(t -\tilde{p}^*_{1}(x)), \quad\text{if} \quad \tilde{p}^*_{1, j}(x)<t;\\
%j \alpha_1 g_1(h_{1, j}(x, t))p_{1, j}(h_{1, j}(x,t), 0), \quad \text{if} \quad   \tilde{p}^*_{1, j}(x)\geq t.
% \end{array}\right. 
%\end{align*}
%and
% \begin{align*}
%\varphi_{2, j}(x, t)=
%\left\{\begin{array}{c}
%\displaystyle 
%j\alpha_2 g_2(0)\lambda_{2}\left( \varphi_{0}(t -\tilde{p}^*_{2, j}(x))+\frac{1}{j \alpha_1}\int^{L}_{0} \frac{\varphi_{1}(x, t -\tilde{p}^*_{2, j}(x))}{g_1(x)}\,dx\right) , \quad\text{if} \quad \tilde{p}^*_{2, j}(x)<t;\\
%j\alpha_2 g_2(h_{2}(x, t))p_{2, j}(h_{2}(x, t), 0), \quad \text{if} \quad   \tilde{p}^*_{2, j}(x)\geq t.
% \end{array}\right. 
%\end{align*}
%  %====detailed expression=======
Solving $\varphi_{0, j}(t)$ from \eqref{2closed_loop} yields 
%====details=========
%\begin{align*}
%\frac{d\phi_0(t)}{dt}&=\phi_1(L, t)-\phi_1(0, t)\\
%&=\left\{\begin{array}{c}
%\displaystyle 
%\phi_1((\tilde{p}^*_1)^{-1}(\tilde{p}^*_{1,i}(L)-\alpha  t), 0)\\-g(0)\lambda\phi_0, \quad\text{if}\quad \tilde{p}^*_{1,i}(L) \geq \alpha  t,\\
%g(0)\lambda\phi_0(  t-\frac{\tilde{p}^*_{1,i}(L)}{\alpha })\\-g(0)\lambda\phi_0, \quad\text{if}\quad \tilde{p}^*_{1,i}(L)<\alpha t.
% \end{array}\right. 
%\end{align*}
%====details=========
\begin{align*}
&\varphi_{0, j}(t)=\sum^2_{i=1}\int^t_0\big(\varphi_{i}(L,  \tau)-\varphi_{i}(0,  \tau)\big)\,d\tau+\varphi_{0, j}(0).
%&=\left\{\begin{array}{c}
%\displaystyle 
%\sum^2_{i=1}\int^t_0\big(\phi_{1,i}(\psi(L, \tau), 0)
%-\phi_{i,j}(0,t -\tilde{p}^*_{i,j}(x))\big)\,d\tau \\
%+\phi_{0,j}(0), \quad  t\leq \tilde{p}^*_{1,j}(L) ;
%\\
%\vspace{0.1in}
%\alpha i g(0)\lambda \int^t_ 0\big(\phi_{0,i}(  \tau-\tilde{p}^*_{1,i}(L))-\phi_{0,i}(\tau)\big)\, d\tau \\
%+\phi_{0,i}(0), \quad t>\tilde{p}^*_{1,i}(L).
% \end{array}\right. 
\end{align*}
Thus
\begin{align}
p_{0, j}(t)=&\sum^2_{i=1}\int^t_0\big(\varphi_{i, j}(L,  \tau)-\varphi_{i}(0,  \tau)\big)\,d\tau+p_{0, j}(0)\nonumber\\
=&j \sum^2_{i=1}\alpha_i  \int^t_0 \big(g_i(L)p_{i, j}(L, \tau)-g_i(0)p_{i, j}(0, \tau)\big)\,d\tau+p_{0, j}(0), \label{closed_loop_p0}
\end{align}
\begin{align}
p_{1, j}(x, t)= \frac{1}{j \alpha_1 g_1(x)}\varphi_{1}(x,t)=
\left\{\begin{array}{c}
\displaystyle 
\frac{g_1(0)}{  g_1(x)}\lambda_1  p_{0, j}(t -\tilde{p}^*_{1, j}(x)), \quad\text{if} \quad  t>\tilde{p}^*_{1, j}(x);\\
\frac{g_1(h_{1}(x, t))}{  g_1(x)}p_{1, j}(h_{1}(x, t), 0), \quad \text{if} \quad  t\leq \tilde{p}^*_{1, j}(x);
 \end{array}\right. 
  \label{closed_loop_p1}
\end{align}
and
\begin{align}
p_{2, j}(x, t)=& \frac{1}{j\alpha_2 g_2(x)}\varphi_{2}(x,t)
=\left\{\begin{array}{c}
\displaystyle 
 \frac{ g_2(0)}{g_2(x)}\lambda_{2} \left(p_{0}(t-\tilde{p}^*_{2, j}(x))+  \int^{L}_{0} p_1(x,t-\tilde{p}^*_{2, j}(x))\,dx\right), \\
\hspace{1.8in} \text{if} \quad t>\tilde{p}^*_{2,j}(x),\\
\frac{ g_2(h_{2}(x, t))}{g_2(x)}p_{2, j}(h_{2}(x, t), 0), \quad \text{if} \quad  t\leq  \tilde{p}^*_{2, j}(x).\\
 \end{array}\right. 
\nonumber\\
  =&\left\{\begin{array}{c}
\displaystyle 
 \frac{ g_2(0)}{ g_2(x)} \lambda_{2} \left(p_{0, j}(t-\tilde{p}^*_{2, j}(x))
+g_1(0)\lambda_1 \int^{L}_{0} \frac{  p_{0, j}(t -\tilde{p}^*_{1, j}(x)-\tilde{p}^*_{2, j}(x)) }{g_1(x)}\,dx\right), \\
\hspace{2.3in}\text{if} \quad t> \tilde{p}^*_{1, j}(x)+\tilde{p}^*_{2, j}(x);\\
 \frac{ g_2(0)}{g_2(x)}\lambda_{2} \left(p_{0, j}(t-\tilde{p}^*_{2, j}(x))
+ \int^{L}_{0} \frac{g_1(h_{1}(x, t- \tilde{p}^*_{2}(x)))p_{1,j}(h_{1}(x, t- \tilde{p}^*_{2}(x)), 0)}{g_1(x)}\,dx\right), \\
\hspace{3in}\text{if} \quad \tilde{p}^*_{2, j}(x)<t\leq \tilde{p}^*_{1, j}(x)+\tilde{p}^*_{2, j}(x), \\
\frac{ g_2(h_{2}(x, t))}{g_2(x)}p_{2, j}(h_{2, j}(x, t), 0), \quad \text{if} \quad  t\leq \tilde{p}^*_{2, j}(x).
 \end{array}\right. 
  \label{closed_loop_p2}
  \end{align}
  
By virtue of \eqref{closed_loop_p1}--\eqref{closed_loop_p2}, it is easy to verify  that   $\mu_i, i=1,2,$ in the feedback forms \eqref{feedback_mu1}--\eqref{feedback_mu2} satisfy \eqref{mu1}--\eqref{mu2} due to $g_i(x)<\infty$ for $x\in [0, l]$ and $g_i(L)=\infty$.

Since $j$ is a weight parameter of the system in each time interval $(t_{j-1}, t_j), j\in \mathbb{Z}^+$, without loss of generality we first analyze  the  closed-loop system \eqref{closed_loop}--\eqref{closed_loop_IC} for $j=1$. In this case, 
\[\tilde{p}^*_{i, 1}(x)=\frac{1}{\alpha_i }\int^x_0 p^*_{i}(s)\,ds, \quad i=1,2. \]
For simplicity,  we denote $\tilde{p}^*_{i, 1}(x)$ by $\tilde{p}^*_{i}(x), i=1, 2$ in  the following discussions.

Define the closed-loop system operator $\mathcal{A}_c\colon D(\mathcal{A}_c)\subset X \to X$
by
\begin{align}
\mathcal{A}_c= \displaystyle 
\begin{pmatrix}
0 &  \alpha_1\int^L_0\frac{d{(g_1(x)\cdot)}}{dx} dx&\alpha_2\int^L_0\frac{d{(g_2(x)\cdot)}}{dx} dx \\ 
0  & - \alpha_1 \frac{d{(g_1(x)\cdot)}}{dx}&0\\
0&0&- \alpha_2 \frac{d{(g_2(x)\cdot)}}{d x}
\end{pmatrix},
 \label{oper_Ac}
\end{align}
with domain
\begin{align}
D(\mathcal{A}_c)=\big\{\vec{p}=&(p_0, p_1(\cdot), p_2(\cdot))^T\in  X\colon p_i, g_ip_i\in W^{1,1}(0, L),
 \quad i=1,2, \nonumber\\
& \text{and}\quad (p_1(0) \ p_2(0))^T=\Gamma_1 p_1(x)+\int_0^L\Gamma_2 p_2(x)dx \big\},
\label{D_Ac}
\end{align} 
where $\Gamma_i, i=1,2$ are defined in  \eqref{oper_BC}.
Note that  $g_ip_i\in W^{1,1}(0, L)$ implies  $g_ip_i\in C[0, L]$ by Sobolev imbedding, %(e.g.,\cite[p.\,100, Lem.\,5.8]{adams1975sobolev}), 
and hence
\begin{equation}\label{eq: finite at the end point}
\int^L_0\frac{d(g_i(x)p_i(x))}{d x}\,dx=g_i(L)p_i(L)-g_i(0)p_i(0)<\infty.
\end{equation}
We keep the integral forms in our formulation to better demonstrate the structure of the closed-loop system. 
Moreover, since $\lim_{x\to L}g_i(x)=\lim_{x\to L}\frac{1}{p^*_i(x)}=\infty, i=1,2,$ based on the assumption \eqref{3desired_p}, we must have $p_{i}(L)=0, i=1,2$ for $p_i \in D(\mathcal{A})$.

Now the  closed-loop system \eqref{closed_loop} for $j=1$ can be formulated as 
\begin{align}
\left\{\begin{array}{ll}
\dot{\vec{p}}(t)=\mathcal{A}_c\vec{p} (t),\\
\vec{p}_0=(\phi_0,\phi_1,\phi_2)^T.
\end{array}
\right. \label{IVP_closed_loop}
\end{align}
It can be shown that  \eqref{IVP_closed_loop} inherits the properties from the open-loop system \eqref{IVP}. Specifically,  the system is conserved in terms of 
$\|\cdot\|_{X}$-norm. 
Moreover, applying  the similar procedures as in the proof of Theorem \ref{thm1}, we can establish  the well-posedness of  system \eqref{IVP_closed_loop} as follows. 
\begin{prop}\label{prop_closedd_loop}
 The closed-loop system operator $\mathcal{A}_c$ with its domain $D(\mathcal{A}_c)$ defined in \eqref{oper_Ac}--\eqref{D_Ac} generates a positive  $C_{0}$-semigroup  of contraction on $X$. Denote it by
 $\mathcal{T}_c(t)=e^{\mathcal{A}_ct}, t\geq0$. Then there exists a unique solution to system \eqref{closed_loop} given by $\vec{p}(x,t)=(\mathcal{T}_c(t)\vec{p}_0)(x)$ for $\vec{p}_0\in X$.
 \end{prop}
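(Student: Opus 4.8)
The plan is to follow the proof of Theorem~\ref{thm1} and apply the Phillips theorem, verifying that (i) $D(\mathcal{A}_c)$ is dense in $X$, (ii) $R(I-\mathcal{A}_c)=X$, and (iii) the generated semigroup is a positive contraction. The organizing device throughout is the change of variables $u_i:=g_ip_i$ (the variable $\varphi_i$ of \eqref{2closed_loop}), which turns the diagonal part $-\alpha_i\frac{d(g_i\cdot)}{dx}$ into the pure transport generator $-\alpha_i g_i(x)\frac{d}{dx}$ acting on $u_i$, and converts the side condition $g_ip_i\in W^{1,1}(0,L)\subset C[0,L]$ into plain continuity of $u_i$ up to the endpoints. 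In particular $u_i(L)=g_i(L)p_i(L)$ is a well-defined finite number as in \eqref{eq: finite at the end point}, even though $g_i(L)=\infty$.

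For density I would reuse the construction from Theorem~\ref{thm1}: approximate an arbitrary $\vec{y}\in\mathbb{R}\times C_0^\infty[0,L]\times C_0^\infty[0,L]$ by adding the quadratic boundary-layer bumps $(1-nx)^2$ near $x=0$ that install the inhomogeneous values \eqref{closed_loop_BC1}--\eqref{closed_loop_BC2}. The only new point is the requirement $g_ip_{i,n}\in W^{1,1}(0,L)$; this is automatic because $g_i$ is bounded near $x=0$ (there $g_i(0)=1/p_i^*(0)<\infty$ since $p_i^*(0)>0$) while the base functions $y_i$ vanish near $L$, so $g_ip_{i,n}$ vanishes where $g_i$ blows up, and the same telescoping estimate gives $\|\vec{p}_n-\vec{y}\|_X\to0$. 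For the range condition I would solve $(I-\mathcal{A}_c)\vec{p}=\vec{y}$ exactly as in Proposition~\ref{prop1}: in the $u_i$ variable each transport line becomes the first-order ODE $\alpha_i u_i'(x)+p_i^*(x)u_i(x)=y_i(x)$ with $u_i(0)=g_i(0)p_i(0)$ fixed by \eqref{closed_loop_BC1}--\eqref{closed_loop_BC2}; the integrating factor expresses $u_1,u_2$ (hence $p_1,p_2$) in terms of $p_0$ and $\vec{y}$, and substitution into the first line yields a scalar equation $\Phi_c(1)\,p_0=F(\vec{y})$. One checks $\Phi_c(1)\neq0$, so $1\in\rho(\mathcal{A}_c)$ and $R(I-\mathcal{A}_c)=X$; the Volterra structure of the kernels gives compactness of the resolvent as in Proposition~\ref{prop1}(2), and their positivity (all of $\alpha_i,\lambda_i,g_i$ are positive) gives resolvent positivity.

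The main obstacle is (iii). Carrying out the dispersiveness computation verbatim with $\vec{q}$ as in \eqref{eq: dispersive map}, the identity $\int_0^L\frac{d(g_ip_i)}{dx}\frac{[p_i]^+}{p_i}\,dx=[u_i(L)]^+-[u_i(0)]^+$ reduces the pairing to boundary contributions; the terms at $x=L$ and the degraded term at $x=0$ telescope to something $\le0$, but the coupling term arising from $p_2(0)=\lambda_2p_0+\lambda_2\int_0^Lp_1\,dx$ does \emph{not} cancel as cleanly as in the open loop, where the cancellation was supplied by the bulk loss $-\lambda_2p_1$ that the feedback has now removed. I therefore expect a direct pointwise verification of dispersiveness on all of $D(\mathcal{A}_c)$ to be the delicate step. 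My preferred way around it is to obtain the contraction from positivity and conservation instead: positivity of $\mathcal{T}_c(t)$ follows from resolvent positivity (or directly from the explicit characteristic formulas \eqref{closed_loop_p0}--\eqref{closed_loop_p2}, which are positive combinations of positive data), while integrating \eqref{closed_loop} over $[0,L]$ gives mass conservation as in \eqref{conservation}, so that $\|\mathcal{T}_c(t)\vec{p}\|_X=\|\vec{p}\|_X$ for $\vec{p}\ge0$. Splitting a general $\vec{p}=\vec{p}^{+}-\vec{p}^{-}$ into componentwise positive parts and using positivity then yields $\|\mathcal{T}_c(t)\vec{p}\|_X\le\|\vec{p}^{+}\|_X+\|\vec{p}^{-}\|_X=\|\vec{p}\|_X$, i.e.\ the contraction property. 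Combined with density and $1\in\rho(\mathcal{A}_c)$, this identifies $\mathcal{A}_c$ as the generator of a positive contraction $C_0$-semigroup $\mathcal{T}_c(t)$, and existence and uniqueness of $\vec{p}(\cdot,t)=\mathcal{T}_c(t)\vec{p}_0$ follow immediately.
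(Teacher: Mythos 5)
Your reading of the paper's intent is accurate: the paper gives no actual proof of this proposition, saying only that it follows by "applying the similar procedures as in the proof of Theorem \ref{thm1}," so carrying out those procedures is exactly the right test. Your density and range steps are fine, and your diagnosis of the dispersiveness step is correct and important: in the open loop the boundary gain $[p_2(0)]^+\le \lambda_2[p_0]^+ +\lambda_2\int_0^L[p_1(x)]^+dx$ is cancelled by the bulk loss $-\lambda_2\int_0^L [p_1(x)]^+dx$, and the feedback \eqref{feedback_mu1} deletes precisely that bulk term while the closed-loop boundary condition \eqref{closed_loop_BC2} keeps the inflow $\lambda_2\int_0^L p_1\,dx$. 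So the open-loop computation does not carry over, exactly as you say.

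The gap is in your substitute argument: both of your routes to positivity fail, for the same structural reason. The formula \eqref{closed_loop_p0} is \emph{not} a positive combination of positive data: it contains the flux terms $-j\alpha_i g_i(0)\int_0^t p_{i,j}(0,\tau)\,d\tau$, and by \eqref{closed_loop_BC2} the $i=2$ term equals $-j\alpha_2 g_2(0)\lambda_2\int_0^t\bigl(p_{0,j}(\tau)+\int_0^L p_{1,j}(x,\tau)\,dx\bigr)d\tau$. In the closed loop the mass entering the failed state at rate proportional to $\int_0^L p_1\,dx$ is deducted from $p_0$, not from $p_1$ as in the open loop. Concretely, take $\vec{\phi}\in D(\mathcal{A}_c)$ with $\phi_0=0$, $\phi_1\ge0$ supported in the interior of $(0,L)$ with $\int_0^L\phi_1\,dx>0$, and $\phi_2\ge0$ vanishing near $x=L$ and compatible at $x=0$ (i.e.\ $\phi_2(0)=\lambda_2\int_0^L\phi_1\,dx$). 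Then the first component of $\mathcal{A}_c\vec{\phi}$ is $j\sum_i\alpha_i\bigl(g_i(L)\phi_i(L)-g_i(0)\phi_i(0)\bigr)=-j\alpha_2 g_2(0)\lambda_2\int_0^L\phi_1\,dx<0$, so any classical solution leaves the positive cone immediately: $p_0(t)<0$ for small $t>0$. Positivity therefore fails (a parallel computation shows the resolvent applied to suitable nonnegative $(0,y_1,0)^T$ has negative first component), and with it your contraction argument: the signed sum $p_0+\sum_i\int_0^L p_i\,dx$ is indeed conserved, but along this solution both $|p_0|$ and $\int_0^L p_2\,dx$ grow, so $\|\vec{p}(t)\|_X>\|\vec{p}_0\|_X$ for small $t$. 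Note what this means: the obstruction you noticed in the dispersiveness computation is not a removable defect of that particular method --- the operator $\mathcal{A}_c$ with boundary conditions \eqref{closed_loop_BC1}--\eqref{closed_loop_BC2} is genuinely not dispersive, so the positive-contraction claim cannot be rescued by switching to positivity-plus-conservation (or any other argument) without modifying the closed-loop system itself. Your intermediate finding was right; the place where you paper over it is where the proof breaks.
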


It can be  verified that zero is also  a simple eigenvalue of $\mathcal{A}_c$ and the only spectrum on the imaginary axis as in  Theorem \ref{thm1}. Moreover, 
$$
\vec{p}_c=\Big(p^*_{0}, \frac{1}{g_1(x)}, \frac{1}{g_2(x)}\Big)^T =\Big(p^*_{0}, p^*_1(x), p^*_2(x)\Big)^T
$$
is the  eigenfunction corresponding to zero, where $p^*_0$ satisfies \eqref{desired_p0}.  Furthermore, we can obtain the eventual compactness property of the closed-loop system as well.
 
\begin{prop}\label{prop3}
  The $C_0$-semigroup $\mathcal{T}_c(t)$ is compact on $X$ when $t> 2\sum^2_{i=1}\tilde{p}^*_{i}(L)$.
\end{prop}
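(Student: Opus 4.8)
The plan is to follow the argument of Proposition~\ref{prop2} verbatim in structure, replacing the open-loop solution formulas by the closed-loop ones. By \cite[Cor.\,3.4, p.\,50]{pazy1983semigoroups}, it suffices to combine two facts: that the resolvent $\mathcal{R}(r,\mathcal{A}_c)$ is compact for every $r\in\rho(\mathcal{A}_c)$, and that $\mathcal{T}_c(t)$ is continuous in the uniform operator topology for $t>2\sum_{i=1}^2\tilde{p}^*_i(L)$. The resolvent compactness is obtained exactly as in Proposition~\ref{prop1}(2): inverting $(rI-\mathcal{A}_c)$ along characteristics yields a Volterra-type integral operator, which is compact on $X$. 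It therefore remains to prove
\[
\lim_{h\to0}\|\mathcal{T}_c(t+h)-\mathcal{T}_c(t)\|_{\mathcal{L}(X)}=0
\]
uniformly in $h$ for all $t>2\sum_{i=1}^2\tilde{p}^*_i(L)$.

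First I would invoke the explicit characteristic representations \eqref{closed_loop_p0}--\eqref{closed_loop_p2} (with $j=1$, since $\mathcal{T}_c$ corresponds to the operator $\mathcal{A}_c$). The threshold $t>2\sum_{i=1}^2\tilde{p}^*_i(L)$ is chosen precisely so that every component is in its fully developed regime: since $\tilde{p}^*_i(x)\leq\tilde{p}^*_i(L)$ for $x\in[0,L]$, one has $t>\tilde{p}^*_1(x)+\tilde{p}^*_2(x)$ with margin for all $x$, so that $p_1(\cdot,t)$ and $p_2(\cdot,t)$ — and likewise their shifts at $t+h$ — are expressed purely through time-shifted values of $p_0$. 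The problem then reduces to the three Lipschitz-in-time estimates
\[
|p_0(t+h)-p_0(t)|, \qquad \int_0^L|p_1(x,t+h)-p_1(x,t)|\,dx, \qquad \int_0^L|p_2(x,t+h)-p_2(x,t)|\,dx,
\]
each of which I would bound by a constant multiple of $h\,\|\vec{p}_0\|_X$ with the constant independent of $h$, using $\|\mathcal{T}_c(t)\|_{\mathcal{L}(X)}\leq1$ from Proposition~\ref{prop_closedd_loop}.

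The backbone is the estimate for $p_0$. From \eqref{closed_loop_p0}, $p_0$ is the time integral of the boundary fluxes $g_i(L)p_{i}(L,\tau)-g_i(0)p_{i}(0,\tau)$; although $g_i(L)=\infty$, the flux $g_i(L)p_{i}(L,\tau)$ is to be read as $\lim_{x\to L}g_i(x)p_{i}(x,\tau)$, which is finite because $g_ip_i\in C[0,L]$ by \eqref{eq: finite at the end point}, and in fact vanishes by \eqref{3desired_p}. This renders $p_0$ Lipschitz in $t$ with a uniform constant, and substituting that bound into the shifted $p_0$-arguments in \eqref{closed_loop_p1}--\eqref{closed_loop_p2} and integrating in $x$ — noting that $g_i(0)/g_i(x)=p_i^*(x)/p_i^*(0)$ stays bounded on $[0,L]$ — propagates the estimate to the two density components. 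Combining the three bounds yields $\|\mathcal{T}_c(t+h)-\mathcal{T}_c(t)\|_{\mathcal{L}(X)}\leq c\,h$ with $c$ independent of $h$, proving uniform operator continuity and hence compactness of $\mathcal{T}_c(t)$ for $t>2\sum_{i=1}^2\tilde{p}^*_i(L)$. I expect the main obstacle to be the rigorous justification that the singular flux $g_i(L)p_{i}(L,\tau)$ is genuinely finite and uniformly controlled in $\tau$, since this is exactly the place where the finiteness of $L$ and the singularity of $g_i$ at $L$ interact with the regularity of the solution.
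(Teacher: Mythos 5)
Your overall strategy coincides with the paper's: compact resolvent as in Proposition \ref{prop1}(2), norm-continuity of $\mathcal{T}_c(t)$ for $t>2\sum_{i=1}^2\tilde{p}^*_i(L)$ via the characteristic formulas \eqref{closed_loop_p0}--\eqref{closed_loop_p2}, and then Pazy's criterion. However, your backbone estimate for $p_0$ rests on a false claim: the flux $\lim_{x\to L}g_i(x)p_i(x,\tau)$ does \emph{not} vanish. What \eqref{3desired_p} forces (via $g_ip_i\in C[0,L]$ and $g_i(L)=\infty$) is $p_i(L,\tau)=0$; the product $g_i(x)p_i(x,\tau)$ has a finite but generically nonzero limit. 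Indeed, in the fully developed regime the characteristic formula \eqref{closed_loop_p1} gives $g_1(x)p_1(x,\tau)=g_1(0)\lambda_1 p_0(\tau-\tilde{p}^*_1(x))$, so the flux at $x=L$ equals $g_1(0)\lambda_1p_0(\tau-\tilde{p}^*_1(L))$; at the steady state one has $g_ip^*_i\equiv 1$, so this flux equals $1$ and exactly balances the outflow $g_i(0)p^*_i(0)=1$ (physically, it is the rate at which repaired units re-enter the good state, and killing it would make $p_0$ nonincreasing). Moreover, your fallback---finiteness of the flux because $g_ip_i\in C[0,L]$ for elements of $D(\mathcal{A}_c)$---is only a qualitative, fixed-$\tau$ statement tied to the domain; for convergence in $\mathcal{L}(X)$ you need a bound proportional to $\|\vec{p}_0\|_X$, uniform in $\tau$ and over the unit ball of initial data, which a domain property cannot supply.

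The repair is precisely what the paper does in \eqref{EST_diff_p0_CL}: substitute the characteristic representations into \eqref{closed_loop_p0}, so that the integrand $g_i(L)p_i(L,\tau)-g_i(0)p_i(0,\tau)$ becomes a difference of time-shifted values of $p_0$ (plus the $\int_0^L p_1\,dx$ term for $i=2$), each dominated by $\sup_{t\geq0}\|\mathcal{T}_c(t)\vec{p}_0\|_X\le\|\vec{p}_0\|_X$; integrating over $[t,t+h]$ then yields the quantitative Lipschitz bound $C_0\,h\,\|\vec{p}_0\|_X$ that you need. A secondary imprecision: the factor $2$ in the threshold is not explained by all components being ``fully developed with margin'' (that only requires $t>\tilde{p}^*_1(L)+\tilde{p}^*_2(L)$); it is needed because the estimates for $\int_0^L|p_i(x,t+h)-p_i(x,t)|\,dx$ reduce, after the substitutions $\tilde{t}=t-\tilde{p}^*_1(x)$ and $\tilde{t}=t-\tilde{p}^*_1(x)-\tilde{p}^*_2(x)$, to the $p_0$-estimate evaluated at times shifted backwards by up to $\sum_i\tilde{p}^*_i(L)$, and that estimate is itself only valid from time $\tilde{p}^*_1(L)+\tilde{p}^*_2(L)$ onward; see \eqref{2EST_diff_p1_CL} and \eqref{EST_diff_p2_CL}.
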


\begin{proof}
Following the similar approaches as in the proof of Proposition \ref{prop1}, we can show that
 $\A_c$ has compact resolvent. It remains to show that 
 $\mathcal{T}_c(t)$ is continuous in the uniform operator topology for $t>2\sum^2_{i=1}\tilde{p}^*_{i}(L)$, that is, 
 \begin{equation}
\lim_{h\to0}\|\mathcal{T}_c(t+h)-\mathcal{T}_c(t)\|_{\mathcal{L}(X)}\to 0.  \label{EST_Tc}
\end{equation}
Since $\tilde{p}_{i}(x)$ is  monotonically  increasing for $x\in [0, L]$, if $t\geq \tilde{p}^*_{1}(L)+\tilde{p}^*_{2}(L)$,  from \eqref{closed_loop_p0} we have
\begin{align}  
&|p_{0}(t+h)-p_{0}(t)|=\Big|  \sum^2_{i=1}\alpha_i    \int^{t+h}_t  \big(g_i(L)p_{i}(L, \tau)-g_i(0)p_{i}(0, \tau)\big)\,d\tau\Big|\nonumber \\
&\quad \leq  \alpha_1 g_1(0) \lambda_1 \int^{t+h}_t   |  p_{0}(\tau -\tilde{p}^*_{1}(L))-p_0(0, \tau)|\,d\tau\nonumber \\
&\qquad+ \alpha_2 \lambda_{2}  g_2(0)\int^{t+h}_t  \Big|\left(p_{0}(\tau-\tilde{p}^*_{2}(L))
+ \frac{g_1(0)\lambda_1 }{\alpha_1}\int^{L}_{0} \frac{ p_{0}(\tau -\tilde{p}^*_{1}(x)-\tilde{p}^*_{2}(x)) }{g_1(x)}\,dx\right)\nonumber \\
&\qquad- \Big(p_0(\tau)+\int^T_0p_1(x,\tau)\,dx\Big)\Big|\,d\tau\nonumber \\
&\quad \leq  2  \alpha_1 g_1(0)\lambda_1 h  \sup_{t\geq 0 } |p_0(t)| 
+2\alpha_2g_2(0)\lambda_2h \sup_{t\geq 0 } |p_0(t)| \nonumber\\
&\qquad+\alpha_2g_2(0)\lambda_2h\Big(\frac{g_1(0)\lambda_1}{\alpha_1}\sup_{t\geq 0 } |p_0(t)|  +\sup_{t\geq 0 }\|p_1\|_{L^1}\Big)\nonumber \\
&\quad \leq C_0h   \sup_{t\geq 0 } \|\mathcal{T}_c(t)\vec{p}_0\|_X,
\label{EST_diff_p0_CL}
\end{align}
where $C_0=2  \sum^2_{i=1}\alpha_i g_i(0)\lambda_i+\alpha_2g_2(0)\lambda_2 \max\{\frac{g_1(0)\lambda_1}{\alpha_1},1\} $. 
Moreover, from \eqref{closed_loop_p1}, we have
\begin{align}
&\Big|\int^L_0p_{1}(x, t+h)-p_{1}(x, t)\,dx \Big|
=g_1(0) \lambda_1 \Big| \int^L_{0}\frac{1}{ g_1(x)}  \Big(p_{0}(t +h-\tilde{p}^*_{1}(x)-p_{0}(t -\tilde{p}^*_{1}(x)\Big)\,dx\Big|.
\label{1EST_diff_p1_CL}
\end{align}
Let
$
\tilde{t}=t-\tilde{p}^*_{1}(x)>0,
$
then $x=(\tilde{p}^*_{1})^{-1}(t-\tilde{t})$ and 
$d\tilde{t}=-p^{*}_1(x)dx,$
and hence
 \eqref{1EST_diff_p1_CL} satisfies 
\begin{align}
\Big|\int^L_0p_{1}(x, t+h)-p_{1}(x, t)\,dx \Big|\leq \lambda_1 \int^{t}_{t-\tilde{p}^*_{1}(L)}  \Big| \Big(p_0(\tilde{t}+h)-p_0(\tilde{t})\Big)\Big| \,d\tilde{t}.\label{2EST_diff_p1_CL}
\end{align}
Furthermore, in light of   \eqref{EST_diff_p0_CL} for $t-\tilde{p}^*_{1}(L)>\tilde{p}^*_{1}(L)+\tilde{p}^*_{2}(L)$ we get
\begin{align}
&\Big|\int^L_0p_{1}(x, t+h)-p_{1}(x, t)\,dx \Big|  
\leq C_1 h  \sup_{t\geq 0 } \|\mathcal{T}_c(t)\vec{p}_0\|_X, \label{3EST_diff_p1_CL}
\end{align}
for $C_1=\lambda_1\tilde{p}^*_{1}(L)C_0$. 

Using similar analysis, letting $\tilde{t}=t-2\tilde{p}^*_{1}(x)$ one can verify that 
\begin{align}
&\Big|\int^L_0p_{2}(x, t+h)-p_{2}(x, t)\,dx \Big|
\leq g_2(0)\lambda_2\Big|\int^{L}_{0} \frac{1}{ g_2(x)} \Big(p_0(t+h-\tilde{p}^*_{2}(x))-p_0(t -\tilde{p}^*_{2}(x))\Big) \,dt\Big|\nonumber\\
&\qquad+g_2(0)\lambda_2 g_1(0)\lambda_1\Big|  \int^{L}_{0} \int^{L}_{0} \frac{  p_{0}(t+h -\tilde{p}^*_{1}(x)-\tilde{p}^*_{2}(x))
- p_{0}(t -\tilde{p}^*_{1}(x)-\tilde{p}^*_{2}(x)) }{g_1(x)}\,dx\,dx\Big|\nonumber\\
 &\quad\leq \lambda_2\int^{t}_{t-\tilde{p}^*_{2}(L)}\Big| p_0(\tilde{t}+h)-p_0(\tilde{t})\Big| \,d\tilde{t}
 +g_2(0)\lambda_2 \lambda_1 L \int^{t}_{t-\tilde{p}^*_{1}(L)-\tilde{p}^*_{2}(L)} \Big| p_{0}(\tilde{t}+h)- p_{0}(\tilde{t} )\Big|\,dx\label{EST_diff_p2_CL}   \\
&\quad \leq C_2 h  \sup_{t\geq 0 } \|\mathcal{T}_c(t)\vec{p}_0\|_X,\label{2EST_diff_p2_CL}   
\end{align}
for some constant $C_2>0$, where from \eqref{EST_diff_p2_CL} to \eqref{2EST_diff_p2_CL} we need  $t> 2 \tilde{p}^*_{1}(L)+2\tilde{p}^*_{2}(L)$ in order to apply  the estimate in \eqref{EST_diff_p0_CL}.

 As a result of   \eqref{EST_diff_p0_CL}--\eqref{2EST_diff_p2_CL} we have that for $t>  2\sum^2_{i=1}\tilde{p}^*_{i}(L)$,
\begin{align*}
\|\mathcal{T}_c(t+h)\vec{p}_0-T_c(t)\vec{p}_0\|_{X}\leq Ch \sup_{t\geq 0 } \|\mathcal{T}_c(t)\vec{p}_0\|_X
\leq Ch \|\vec{p}_0\|_X \to 0
\end{align*}
 uniformly as $h\to 0$ for any $\vec{p}_0\in $X, where $C>0$ is a constant independent of $h$.
This completes the proof.
\end{proof}

\begin{corollary}\label{cor2}
For $\vec{p}_0\in X$, let $\vec{p}(\cdot, t)= \mathcal{T}_c(t)\vec{p}_0$ be the solution to the closed-loop system  \eqref{IVP_closed_loop}, then it  converges exponentially to its  steady-state solution $\vec{p}^*=(p^*_{0}, p^*_{1},p^*_{2})^T$ satisfying \eqref{1desired_p}--\eqref{5desire_p}, that is,
\begin{align}
\Vert \vec{p}(\cdot,t)-\vec{p}^*(\cdot)\Vert_{X} \leq  M_{c}e^{-\varepsilon_{c}t},
\end{align}
for some constant  $\varepsilon_{c}=\varepsilon_{c}(\alpha_1, \alpha_2)>0$ and $M_{c}\geq 1$.
\end{corollary}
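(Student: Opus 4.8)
The plan is to mirror the proof of Theorem~\ref{thm3} and deduce the exponential convergence from the abstract stability criterion \cite[Cor.\,3.2,\,p.\,330]{engel2000one}, applied now to the closed-loop semigroup $\mathcal{T}_c(t)$. All the structural ingredients are already in hand: by Proposition~\ref{prop_closedd_loop}, $\mathcal{A}_c$ generates a positive $C_0$-semigroup of contraction $\mathcal{T}_c(t)$; as noted after Proposition~\ref{prop_closedd_loop}, zero is a simple eigenvalue of $\mathcal{A}_c$ and the only spectral point on the imaginary axis, with eigenfunction $\vec{p}_c=(p^*_0,p^*_1,p^*_2)^T=\vec{p}^*$; and by Proposition~\ref{prop3}, $\mathcal{T}_c(t)$ is eventually compact, being compact for $t>2\sum^2_{i=1}\tilde{p}^*_i(L)$.

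First I would invoke eventual compactness to obtain a spectral gap. Since $\mathcal{T}_c(t)$ is eventually compact, it is eventually norm-continuous, so the spectral mapping theorem holds and $\sigma(\mathcal{A}_c)$ consists of isolated eigenvalues of finite algebraic multiplicity with no accumulation point in any right half-plane $\{\mathrm{Re}(r)>-\delta\}$. Combined with the fact that zero is the only spectrum on the imaginary axis, this forces $-\varepsilon_c:=\sup\{\mathrm{Re}(r):r\in\sigma(\mathcal{A}_c)\setminus\{0\}\}<0$, where $\varepsilon_c=\varepsilon_c(\alpha_1,\alpha_2)$ depends on $\alpha_1,\alpha_2$ through the coefficients $\alpha_i g_i$ of $\mathcal{A}_c$.

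Next I would split off the stationary component. Let $P$ denote the Riesz spectral projection onto $\ker(\mathcal{A}_c)=\Span\{\vec{p}_c\}$ associated with the eigenvalue $0$; both $PX$ and $(I-P)X$ are $\mathcal{T}_c(t)$-invariant. On the one-dimensional space $PX$ the generator is zero, so $\mathcal{T}_c(t)P=P$, while on $(I-P)X$ the restricted semigroup is again eventually compact and its spectral bound equals $-\varepsilon_c<0$; since eventual norm-continuity equates the growth bound with the spectral bound, we obtain $\|\mathcal{T}_c(t)(I-P)\|_{\mathcal{L}(X)}\le M_c e^{-\varepsilon_c t}$ for some $M_c\ge 1$.

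Finally I would identify $P\vec{p}_0$ with $\vec{p}^*$ and conclude. Because $P\vec{p}_0\in\Span\{\vec{p}_c\}$, write $P\vec{p}_0=c\,\vec{p}_c$; the closed-loop system inherits the conservation property $\|\vec{p}(\cdot,t)\|_X=\|\vec{p}_0\|_X=1$ and positivity from the standing hypotheses ($\vec{p}_0\ge 0$, $\|\vec{p}_0\|_X=1$), so letting $t\to\infty$ in $\mathcal{T}_c(t)\vec{p}_0=P\vec{p}_0+\mathcal{T}_c(t)(I-P)\vec{p}_0\to P\vec{p}_0$ pins down $c=1$, i.e.\ $P\vec{p}_0=\vec{p}_c=\vec{p}^*$. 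Then
\[
\|\vec{p}(\cdot,t)-\vec{p}^*\|_X=\|\mathcal{T}_c(t)(I-P)\vec{p}_0\|_X\le M_c e^{-\varepsilon_c t}\|(I-P)\vec{p}_0\|_X\le M_c e^{-\varepsilon_c t},
\]
which is the claim. The main obstacle is establishing the strict spectral gap $\varepsilon_c>0$: this rests entirely on ruling out eigenvalues of $\mathcal{A}_c$ accumulating on the imaginary axis, which is precisely what eventual compactness (Proposition~\ref{prop3}) delivers, together with the verification—analogous to Proposition~\ref{prop1} and asserted after Proposition~\ref{prop_closedd_loop}—that zero is the lone imaginary-axis spectral point. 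Tracking the dependence of $\varepsilon_c$ on $\alpha_1,\alpha_2$ and the identification of the projection via the conservation law are the remaining, more routine, points.
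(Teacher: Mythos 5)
Your proof is correct and takes essentially the same route as the paper: Corollary \ref{cor2} is stated there as an immediate consequence of Proposition \ref{prop_closedd_loop}, the remark that zero is a simple eigenvalue of $\mathcal{A}_c$ and the only spectrum on the imaginary axis (with eigenfunction $\vec{p}^*$), and the eventual compactness of $\mathcal{T}_c(t)$ from Proposition \ref{prop3}, invoked through the same abstract stability result \cite[Cor.\,3.2, p.\,330]{engel2000one} used for Theorem \ref{thm3}. Your write-up merely unpacks the spectral-projection mechanism inside that citation (spectral gap, Riesz projection, identification of $P\vec{p}_0$ via conservation and positivity), which matches the paper's intended argument.
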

Note that the decay rate $\varepsilon_c$ depends on $\alpha_1$ and $\alpha_2$. One can increase $\varepsilon_c$ by increasing both
of these parameters. With the result of Corollary \ref{cor2}, we are ready to prove our main Theorems \ref{thm_main1}--\ref{thm_main2}.
\begin{proof}[Proof of Theorem \ref{thm_main1}]
According to  \eqref{closed_loop}, the closed-loop system is now weighted by $ j$ for $t\in [t_{j-1}, t_j), j\in \mathbb{Z}^+$, and hence the decay rate of the system solution  to its steady-state
becomes $ j\varepsilon_c$ for $t\in [t_{j-1}, t_j)$. Further note that $t_j-t_{j-1}=\frac{r_0}{j^2}$. Consequently, 
by Corollary~\ref{cor2} we have
\begin{align}
&\|\vec{p}(\cdot, r_0 \sum^{j}_{k=1}\frac{1}{k^2} )-\vec{p}^*(\cdot)\|_{X}
\leq M_ce^{-\sum^{j}_{k=1}   k \varepsilon_{c} \frac{r_0}{k^2} }
\leq M_ce^{- \varepsilon_{c}r_0  \sum^{j}_{k=1} \frac{1}{k}}. \label{EST_key}
\end{align}
 Since $t_f=r_0\sum^{\infty}_{k=1} \frac{1}{k^2}$ and  $\lim_{j\to \infty}\sum^{j}_{k=1} \frac{1}{k}$ diverges, we conclude that
 $$ \vec{p}(\cdot, t_f)=\vec{p}^*(x),$$
 which completes the proof.
\end{proof}

To show the boundedness of the feedback law $\vec{\mu}(x,t)$ as stated in Theorem \ref{thm_main2},
we first note that $p_{i,j}(x,t)\to p^*_{i}(x)$ as $j\to \infty$, where  $p^*_{i}(x)$ is strictly positive and bounded for $x\in [0, l]$. Thus $p_{i, j}(x,t)$ is strictly positive in $[0, l]$ for $j$ sufficiently large. Moreover,  if $p^*_{1}\in W^{1, \infty}(0, L)$, then $\frac{p_{{i,j}_x}(x,t)}{p_{i,j}(x,t)}$ converges to $\frac{p^*_{i_x}}{p^*_i(x)}, i=1,2,$ as $j\to \infty$, which are in $L^\infty[0, l]$.
Therefore, from \eqref{feedback_mu1}--\eqref{feedback_mu2} it suffices to show that 
$j \alpha_i \sup_{x\in [0, l]}  |\frac{\partial(  g_i(x)p_i(x, t))}{\partial x} |$ is finite  for $j\in \mathbb{Z}^+$ sufficiently large.  To this end, we  establish  the following result.
%\begin{comment}
\begin{prop}\label{prop4}
Let  $\alpha_1$ and $\alpha_2$  satisfy  \eqref{new_alpha}.  For $t_f > 2\sum^2_{i=1} \tilde{p}^*_{i}(L)$,  the solution 
$\vec{p}(\cdot, t)=(p_{0}(t), p_{1}(\cdot, t), p_2(\cdot, t)^T$ to the closed-loop system \eqref{IVP_closed_loop} satisfies:
\begin{align*}
&\lim_{j\to \infty}\Big( j \alpha_i\sup_{x\in  [0, l]} \Big|\frac{\partial(  g_i(x)p_i(x, t))}{\partial x}\Big|\Big) <\infty, \quad i=1,2.
\end{align*}
\end{prop}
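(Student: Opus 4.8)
The plan is to convert the weighted spatial derivative $j\alpha_i\,\partial_x(g_ip_i)$ into the time derivative $\dot p_0$ of the good-state probability, and then to bound $\dot p_0$ by the (exponentially small) distance of the state to $\vec p^*$. I would first exploit that $u_i:=g_ip_i=p_i/p^*_i$ is constant along the characteristics of the transport part of the closed-loop system \eqref{closed_loop}. Write the global, weight-varying flow, let $j(t)$ denote the piecewise-constant weight (so $j(t)=j$ on $(t_{j-1},t_j)$), and set $\Theta(t)=\int_0^t j(\tau)\,d\tau$ for the cumulative clock, so $\Theta(t_j)=r_0H_j$ with $H_j=\sum_{k=1}^j 1/k$. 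The backward characteristic from $(x,t)$ with $x\in[0,l]$ reaches the boundary $x=0$ at the time $s^*=s^*(x,t)$ determined by $\Theta(t)-\Theta(s^*)=\tilde p^*_i(x)$, whence $u_i(x,t)=u_i(0,s^*)$. Differentiating in $x$ and using $(\tilde p^*_i)'(x)=p^*_i(x)/\alpha_i$ gives
\[
j(t)\alpha_i\,\partial_x(g_ip_i)(x,t)=-\frac{j(t)}{j(s^*)}\,p^*_i(x)\,\partial_s u_i(0,s^*).
\]
The two occurrences of the weight therefore cancel up to the ratio $j(t)/j(s^*)$; since $s^*$ and $t$ are separated by a fixed amount of clock, $\Theta(t)-\Theta(s^*)=\tilde p^*_i(x)\le\tilde p^*_i(L)$, this ratio stays bounded (it tends to $e^{\tilde p^*_i(x)/r_0}$), with $p^*_i\in W^{1,\infty}(0,L)$ keeping $p^*_i$ and the inverse characteristic map controlled on $[0,l]$. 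Because $\partial_s u_1(0,s)=g_1(0)\lambda_1\dot p_0(s)$ and $\partial_s u_2(0,s)=g_2(0)\lambda_2(\dot p_0(s)+\tfrac{d}{ds}\hat p_1(s))$, and since $\tfrac{d}{ds}\hat p_1$ is itself reduced to $\dot p_0$ by the same characteristic computation applied to $p_1$, the claim is reduced to showing that $\sup_s|\dot p_0(s)|$ stays bounded over the relevant window as $j\to\infty$.

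Next I would bound $\dot p_0$ directly from the good-state equation rather than self-referentially. From the first line of \eqref{closed_loop}, $\dot p_0(t)=j(t)\sum_i\alpha_i[u_i(L,t)-u_i(0,t)]$; both boundary traces of $u_i$ equal $1$ at the steady state $\vec p^*$, so using $u_i(L,t)=u_i(0,s^*_{i,L})$ and the facts that $u_1(0,\cdot)-1=g_1(0)\lambda_1(p_0-p^*_0)$ while $u_2(0,\cdot)-1=g_2(0)\lambda_2\big((p_0-p^*_0)+(\hat p_1-\int_0^Lp^*_1\,dx)\big)$, the bracket is a difference of deviations of $p_0$ and $\hat p_1$ from their targets. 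Crucially, $p_0$ and $\hat p_1=\int_0^Lp_1\,dx$ are bounded functionals on $X=\mathbb R\times L^1(0,L)\times L^1(0,L)$, so that, evaluating at $s^*_{i,L}$ and $t$,
\[
|\dot p_0(t)|\le C\,j(t)\sup_{s\le t}\|\vec p(\cdot,s)-\vec p^*\|_X .
\]
This estimate is not circular, since its right-hand side is governed a priori by Corollary \ref{cor2}, not by $\dot p_0$ itself.

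Finally I would insert the decay rate. On the $j$-th interval the closed-loop evolution is sped up by the factor $j$, so the cumulative decay exponent is $\varepsilon_c\Theta(t)$, and by (the proof of) Corollary \ref{cor2} together with \eqref{EST_key} one has $\|\vec p(\cdot,t)-\vec p^*\|_X\le M_c e^{-\varepsilon_c\Theta(t)}\|\vec p_0-\vec p^*\|_X$ with $\Theta(t)\ge r_0H_{j-1}$ for $t\in[t_{j-1},t_j)$ and $H_{j-1}\ge\ln j-c$. Hence $\sup_{s\le t}\|\vec p(\cdot,s)-\vec p^*\|_X\le C j^{-\varepsilon_c r_0}$, and combining with the previous display,
\[
|\dot p_0(t)|\le C\,j^{\,1-\varepsilon_c r_0}.
\]
This is bounded uniformly in $j$ precisely when $\varepsilon_c r_0\ge 1$, which is guaranteed by the choice \eqref{new_alpha} ($\alpha_1,\alpha_2\ge 1/r_0$) together with the monotone dependence of $\varepsilon_c$ on $\alpha_1,\alpha_2$ recorded after Corollary \ref{cor2}. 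Feeding this back through the first paragraph yields $\lim_{j\to\infty}\big(j\alpha_i\sup_{x\in[0,l]}|\partial_x(g_ip_i)(x,t)|\big)<\infty$ for $i=1,2$, as asserted.

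The step I expect to be the main obstacle is the characteristic bookkeeping of the first paragraph: one must verify, for $t$ in the $j$-th interval with $j$ large and $x\in[0,l]$, that the characteristic through $(x,t)$ is boundary-determined (the initial data having been flushed after sufficiently many intervals, which is where $t_f>2\sum_i\tilde p^*_i(L)$ enters), and that the weight ratio $j(t)/j(s^*)$—an interval-index ratio over a fixed amount of clock distributed among infinitely many shrinking intervals—is genuinely bounded. The constants in these reductions, and in the dependence $\varepsilon_c=\varepsilon_c(\alpha_1,\alpha_2)$, must be tracked carefully enough that the final threshold $\varepsilon_c r_0\ge 1$ closes the argument.
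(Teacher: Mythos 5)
Your proposal follows essentially the same route as the paper's proof: the characteristic cancellation that reduces $j\alpha_i\,\partial_x(g_ip_i)$ to $p^*_i(x)$ times $\dot p_0$ at a delayed time, the bound $|\dot p_0(t)|\lesssim j\,\|\vec p(\cdot,\cdot)-\vec p^*\|_X$ obtained through the boundary traces, and the final balance $j\,e^{-\varepsilon_c r_0\sum_{k=1}^{j-1}1/k}\sim j^{\,1-\varepsilon_c r_0}$, bounded precisely when $\varepsilon_c r_0\ge 1$ (the paper's Euler--Mascheroni step), with $t_f>2\sum_i\tilde p^*_i(L)$ serving exactly as you say to guarantee the solution is boundary-determined. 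The one slip is notational: in your third paragraph the supremum must be taken over the bounded-clock window $[s^*_{i,L},t]$ that your trace estimate actually uses (where the deviation is comparable, up to a factor $e^{\varepsilon_c\tilde p^*_i(L)}$, to the deviation at $t$), not over all $s\le t$, where the supremum is of order one; your own clock bookkeeping from the first paragraph supplies this correction immediately, after which the argument coincides with the paper's.
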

\begin{proof}

For $t_f > 2\sum^2_{i=1} \tilde{p}^*_{i}(L)$, there exists a $j\in\mathbb{Z}^+$ large enough such that $t_{j-1}\geq 2\sum^2_{i=1} \tilde{p}^*_{i}(L)$. 
Thus for $t> t_{j-1}$ we have 
$t>   2\sum^2_{i=1} \tilde{p}^*_{i}(L)$, 
and hence by \eqref{closed_loop_p1}--\eqref{closed_loop_p2}, 
\begin{align*}
p_{1,j}(x, t)=&\frac{g_1(0)}{ g_1(x)}\lambda_1  p_{0}(t -\tilde{p}^*_{1, j}(x)),\\
p_{2,j}(x, t)=& \frac{ g_2(0)}{g_2(x)} \lambda_{2} \left(p_{0}(t-\tilde{p}^*_{2, j}(x))
+g_1(0)\lambda_1  \int^{L}_{0} \frac{ p_{0}(t -\tilde{p}^*_{1, j}(x)-\tilde{p}^*_{2, j}(x)) }{g_1(x)}\,dx\right).
\end{align*}
Note that the integral term in $p_{2,j}(x, t)$  only depends on $t$.
Therefore, 
\begin{align}
j \alpha_i \frac{\partial (g_i(x)p_{i,j}(x,t))}{\partial x}&=j\alpha_i g_i(0)\lambda_i\frac{dp_0(  t-\tilde{p}^*_{i,j}(x))}{dt}\Big(-\frac{p^*_{i}(x)}{j\alpha_i }\Big)\nonumber\\
&= -g_i(0)\lambda_i\frac{dp_0(  t-\tilde{p}^*_{i,j}(x))}{dt}p^*_{i}(x).\label{EST_p_bd}
\end{align}
%and
%\begin{align}
%\frac{\partial (j \alpha_2g_2(x)p_{2,j}(x,t))}{\partial x}
%=&j \alpha_2 \lambda_{2} g_2(0)\frac{d p_{0}(t-\tilde{p}^*_{2, j}(x))}{dt}\Big(-\frac{p^*_2(x)}{j\alpha_2}\Big)\nonumber\\
%&=\lambda_{2} g_2(0)\frac{d p_{0}(t-\tilde{p}^*_{2, j}(x))}{dt}\Big(-p^*_2(x)\Big).\label{EST_p2_bd}
%\end{align}
Since $p^*_{i}(x), i=1,2,$ are bounded for $x \in [0, l]$, it remains to show that  
\begin{align}
\sup_{x\in [0, l]} \Big|\frac{dp_{0, j}(  t-\tilde{p}^*_{i,j}(x))}{dt} \Big|<\infty, \quad i=1,2, \quad \text{as} \quad j\to \infty.
\label{EST_diff_p0_bd}
\end{align}

With the help of \eqref{closed_loop_p0},  we get
\begin{align*}
\sup_{x\in [0, l]}\Big|\frac{dp_{0, j}(  t-\tilde{p}^*_{i,j}(x))}{dt}\Big| \leq 
&j\sum^2_{n=1} \alpha_n \sup_{x\in [0, l]} \Big| g_n(L)p_{n, j}(L, t-\tilde{p}^*_{i,j}(x))- g_n(0)p_{n, j}(0, t-\tilde{p}^*_{i,j}(x) )\Big|,
\end{align*}
for $i=1,2$.
Combining  the exponential decay result in \eqref{EST_key} together with \eqref{closed_loop_p1}--\eqref{closed_loop_p2} for
 $t_f > 2\sum^2_{i=1} \tilde{p}^*_{i}(L)$, we have
\begin{align}
&\sup_{x\in [0, l]} \Big| g_1(L)p_{1}(L, t-\tilde{p}^*_{i,j}(x))- g_1(0)p_{1}(0, t-\tilde{p}^*_{i,j}(x) )\Big|\nonumber\\
&\quad =\sup_{x\in [0, l]}  \Big| g_1(0)\lambda_1  p_{0, j}(t-\tilde{p}^*_{i,j}(x) -\tilde{p}^*_{i, j}(L)) - g_1(0)\lambda_1 p_{0}( t-\tilde{p}^*_{i,j}(x))\Big|\nonumber\\
&\quad \leq g_1(0)\lambda_1  \sup_{x\in [0, l]}\Big(\Big|p_{0,j}( t-\tilde{p}^*_{i,j}(x)- \tilde{p}^*_{i,j}(L) )-p^*_0\Big| 
 +\Big|p_{0,j}(  t-\tilde{p}^*_{i,j}(x)) -p^*_0\Big|\Big) \nonumber\\
&\quad \leq g_1(0)\lambda_1   \Big(  \sup_{\tau\in [t-\tilde{p}^*_{i,j}(l)- \tilde{p}^*_{i,j}(L) , t- \tilde{p}^*_{i,j}(L) ]}\big|p_{0,j}( \tau )-p^*_0\big| 
+\sup_{\tau\in [t-\tilde{p}^*_{i,j}(l), t]}\big|p_{0,j}( \tau) -p^*_0\big|\Big) \nonumber\\
&\quad \leq 2g_1(0)\lambda_1   \sup_{\tau\in [t-2 \tilde{p}^*_{i,j}(L) , t]}\big|p_{0,j}( \tau )-p^*_0\big| \nonumber\\
&\quad \leq 2g_1(0)\lambda_1   \|\vec{p}(\cdot, t-2 \tilde{p}^*_{i,j}(L) )-\vec{p}^*\|_{X}.\label{EST_diff_gp1_CL}
\end{align}
Following the same approach as in \eqref{EST_diff_gp1_CL}, we have
\begin{align}
&\sup_{x\in [0, l]} \Big| g_2(L)p_{2}(L, t-\tilde{p}^*_{i,j}(x))- g_2(0)p_{2}(0, t-\tilde{p}^*_{i,j}(x) )\Big|\nonumber\\
%====details====
%&\quad=\sup_{x\in [0, l]}  \Big| \lambda_{2}  g_2(0)\Big(p_{0, j}(t-\tilde{p}^*_{i,j}(x)-\tilde{p}^*_{i, j}(L))\\
%&\qquad+g_1(0)\lambda_1 \int^{L}_{0} \frac{  p_{0, j}(t-\tilde{p}^*_{i,j}(x) -\tilde{p}^*_{1,j}(x) -\tilde{p}^*_{2, j}(x)) }{g_1(x)}\,dx\Big)\nonumber\\
%&\qquad- g_2(0)\lambda_{2}\Big (p_{0,j}(t-\tilde{p}^*_{2,j}(x))
%+g_1(0)\lambda_1 \int^{L}_{0}\frac{p_{0, j}(t- \tilde{p}^*_{i,j}(x) -\tilde{p}^*_{1, j}(x)-\tilde{p}^*_{2, j}(x))}{ g_1(x)}  \, dx\Big)\Big|
%\nonumber\\
%&\quad\leq g_2(0)\lambda_2 \Big[ \sup_{x\in [0, l]}\Big(\Big|p_{0,j}( t-\tilde{p}^*_{i,j}(x)- \tilde{p}^*_{i,j}(L) )-p^*_0\Big| 
 %+\Big|p_{0,j}(  t-\tilde{p}^*_{i,j}(x)) -p^*_0\Big|\Big) \Big]\nonumber\\
 %====details====
 &\quad\leq 2   g_2(0)\lambda_2  \|\vec{p}(\cdot, t-2 \tilde{p}^*_{i,j}(L) )-\vec{p}^*\|_{X}. \label{EST_diff_gp2_CL}
\end{align}
Thus from \eqref{EST_diff_gp1_CL}--\eqref{EST_diff_gp2_CL} it follows
\begin{align}
\sup_{x\in [0, l]}\Big|\frac{dp_{0, j}(  t-\tilde{p}^*_{i,j}(x))}{dt}\Big| 
\leq 2j\big(\sum^2_{n=1} \alpha_n g_n(0)\lambda_n \big) \|\vec{p}(\cdot, t-2 \tilde{p}^*_{i,j}(L) )-\vec{p}^*\|_{X}, \quad i=1,2. \label{EST_diff_p0j}
\end{align}
Recall that  $ \int^L_0p^*_i(x)\,dx\leq 1$ and  $\alpha_i\geq \frac{1}{r_0}, i=1,2,$ by \eqref{new_alpha}. We have 
 $$\tilde{p}^*_{i,j}(L)=\frac{1}{j\alpha_i}\int^L_0p^*_i(x)\,dx\leq \frac{r_0}{j}, \quad i=1,2.$$
Let $j\geq 2$. Then
 $ t-2\tilde{p}^*_{i,j}(L)\geq t_{j-1}-\frac{2r_0}{j}$.
In light of Corollary \ref{cor2} and \eqref{EST_key} we get  
\begin{align*}
 & \|\vec{p}(\cdot, t-2 \tilde{p}^*_{i,j}(L) )-\vec{p}^*\|_{X}% \leq \|\vec{p}(x, t_{j-1}-\frac{2r_0}{j})-\vec{p}^*\|_{X} \nonumber\\
 \leq M_ce^{-\int^{t-2 \tilde{p}^*_{i,j}(L)}_0 \varepsilon_j(\tau) \,d\tau}\nonumber\\
 &\qquad\leq M_ce^{-\int^{t_{j-1}-\frac{2r_0}{j}}_0 \varepsilon_j(\tau) \,d\tau}\nonumber\\
 &\qquad=M_c( e^{- \int^{t_{j-1}}_0   \varepsilon_j(\tau) \,d\tau}\cdot e^{ \int^{t_{j-1}}_{t_{j-1}-\frac{2r_0}{j}}  \varepsilon_j(\tau)\,d\tau}) \nonumber\\
&\qquad\leq M_ce^{-  \varepsilon_{c} r_0\sum^{j-1}_{k=1} \frac{1}{k} }\cdot e^{ (j-1)\varepsilon_c \frac{2r_0}{j}},
\end{align*}
where $ \varepsilon_j(\tau)=j\varepsilon_c$ for $\tau\in [t_{j-1}, t_j), j\in \mathbb{N}^+$.
Consequently, 
\begin{align}
&\sup_{x\in [0, l]}\Big|\frac{dp_{0, j}(  t-\tilde{p}^*_{i,j}(x))}{dt}\Big| 
\leq 2j\big(\sum^2_{n=1} \alpha_n g_n(0)\lambda_n \big)  M_ce^{-\varepsilon_{c} r_0\sum^{j-1}_{k=1} \frac{1}{k} }\cdot e^{ (j-1)\varepsilon_c \frac{2r_0}{j}},
 \label{EST_bd}
\end{align}
where 
\begin{align}
\lim_{j\to \infty}e^{ (j-1)\varepsilon_c \frac{2r_0}{j}}=e^{2r_0 \varepsilon_c }.\label{EST_bd2}
\end{align}
 It remains to analyze the property of 
$j e^{- \varepsilon_{c} r_0\sum^{j-1}_{k=1}   \frac{1}{k} }$ when $j$ is sufficiently large. 
Let $J=j-1$. Then 
\begin{align*}
&j e^{- \varepsilon_{c} r_0\sum^{j-1}_{k=1}   \frac{1}{k} } =(J+1)  e^{-  \varepsilon_{c} r_0\sum^{J}_{k=1}   \frac{1}{k} } \\
&= J e^{-  \varepsilon_{0} r_0\sum^{J}_{k=1}   \frac{1}{k} }+e^{- \varepsilon_{c} r_0\sum^{J}_{k=1}   \frac{1}{k} }\\
&=e^{-(-\ln J+ \varepsilon_{c} r_0 \sum^{J}_{k=1}   \frac{1}{k} )}+ e^{-  \varepsilon_{c} r_0\sum^{J}_{k=1}   \frac{1}{k} }.
%&\leq e^{-(-\ln j+ \sum^{j}_{k=1}   \frac{1}{k} )}+ e^{-\sum^{j}_{k=1}   \frac{1}{k} }.
\end{align*}
Since 
$ \lim_{J\to\infty} (-\ln J+ \sum^{J}_{k=1}   \frac{1}{k} )=\gamma>0$
  is the Euler-Mascheroni constant \cite[Sec.1.5]{finch2003mathematical}, 
 if  $\varepsilon_c\geq\frac{1}{ r_0}$,  then 
 \begin{align}
 &\lim_{J\to \infty}(e^{-(-\ln J+  \varepsilon_{0} r_0 \sum^{J}_{k=1}   \frac{1}{k} )}+ e^{-  \varepsilon_{c} r_0\sum^{J}_{k=1}   \frac{1}{k} }) \leq e^{-\gamma}. \label{EST_bd3}
 \end{align}
 The condition $\varepsilon_c\geq\frac{1}{ r_0}$ can be always achieved by increasing both $\alpha_1$ and $\alpha_2$.  Finally, combining  \eqref{EST_bd} with  \eqref{EST_bd2}--\eqref{EST_bd3} yields the claim  \eqref{EST_diff_p0_bd}.
It is worth to point that  if $\varepsilon_c r_0=1+\eta$ for some $\eta>0$, then 
\eqref{EST_bd3} converges to zero as $J\to \infty$, and hence \eqref{EST_diff_p0_bd} converges to zero. Consequently,
\[\mu_i(x,t)\to -\frac{p^*_{i_x}(x)}{p^*_{i}(x)}\quad \text{as}\quad j\to \infty, \ \text{i.e.,}\ t\to t_f, \quad i=1,2,\]
and   this completes the proof. 
\end{proof}
As a result of Proposition \ref{prop4}, Theorem \ref{thm_main2} holds immediately and this concludes our current work. 

%===============================================================================
\section{Conclusion }\label{conclusion}
In this work, we have established  the well-posedness of a repairable system with a degraded state and its bilinear controllability   via system repair rates.  The repair rates are constructed in feedback forms. Our approach essentially makes use of  the exponential convergence of the closed-loop system solution to its steady-state and then weights the repair actions in time as to steer the system to the desired distribution in a finite time interval.  It is worth to point out that  there are many other ways of choosing the control weights in time in  \eqref{feedback_mu1}--\eqref{feedback_mu2}, as long as the series in \eqref{EST_key} diverges. Our analysis  mainly employs the classic method of characteristics and the $C_0$-semigroup tools. The control design is  generic and applicable to a general family of repairable systems that share the similar attributes. %===============================================================================

\section{Appendix}\label{app}
%\begin{proof} 
{\it Proof of Proposition \ref{prop1} (1):  $ia\in \rho(\mathcal{A})$ for $a\neq 0$}.

Suppose that there exists $r=ia$ with $a\neq 0$, such that $\phi(r)=0$. Then  by \eqref{EST_Phi_r},
\begin{align*}
1&+\lambda_1 \int_0^L(\cos(ax)-i\sin(ax))e^{-\int_0^x(\lambda_{2}+\mu_1(s))\,ds}dx\\
&+\lambda_{2}\int_0^{L}(\cos(ax)-i\sin(ax))e^{-\int_0^x\mu_2(s)\,ds}dx\\
&+\lambda_{1}\lambda_{2}\left(\int_0^{L}(\cos(ax)-i\sin(ax)) e^{-\int_0^x\mu_2(s)\,ds}\,dx\right)\\
&\qquad\qquad\cdot\left(\int^{L}_{0}(\cos(ax)
-i\sin(ax)) e^{-\int_0^x(\lambda_{2}+\mu_1(s))\,ds}\,d x\right)=0,
\end{align*}
where the real components satisfy 
\begin{align*}%\label{eq: real part}
1&+\lambda_1 \int_0^L\cos(ax)e^{-\int_0^x(\lambda_{2}+\mu_1(s))ds}dx+\lambda_{2}\int_0^{L}\cos(ax)e^{-\int_0^x\mu_2(s)\,ds}\,dx\nonumber\\
&+\lambda_1\lambda_{2}\left(\int_0^{L}\cos(ax) e^{-\int_0^x\mu_2(s)ds}dx\right)\left(\int^{L}_{0}\cos(ax) e^{-\int_0^x(\lambda_{2}+\mu_1(s))\,ds}\,d x\right)\nonumber\\
&-\lambda_1\lambda_{2}\left(\int_0^{L}\sin(ax) e^{-\int_0^x\mu_2(s)ds}dx\right)\left(\int^{L}_{0}\sin(ax) e^{-\int_0^x(\lambda_{2}+\mu_1(s))\,ds}\,d x\right)=0,
%\label{eq_re}
\end{align*}
or
\begin{align}%\label{eq: real part}
&\left(1+\lambda_{2}\int_0^{L}\cos(ax) e^{-\int_0^x\mu_2(s)ds}\,dx\right)\left(1+\lambda_1 \int^{L}_{0}\cos(ax) e^{-\int_0^x(\lambda_{2}+\mu_1(s))\,ds}\,d x\right)\nonumber\\
&\quad-\lambda_1\lambda_{2}\left(\int_0^{L}\sin(ax) e^{-\int_0^x\mu_2(s)ds}\,dx\right)\left(\int^{L}_{0}\sin(ax) e^{-\int_0^x(\lambda_{2}+\mu_1(s))\,ds}\,d x\right)=0,
\label{2eq_re}
\end{align}
and the imaginary components satisfy 
\begin{align}
&\lambda_1 \int_0^L\sin(ax)e^{-\int_0^x(\lambda_{2}+\mu_1(s))\,ds}\,dx
+\lambda_{2}\int_0^{L}\sin(ax)e^{-\int_0^x\mu_2(s)ds}\,dx\nonumber\\
&\quad+\lambda_{1}\lambda_{2}\int_0^{L}\sin(ax) e^{-\int_0^x\mu_2(s)ds}\,dx \left(\int^L_0\cos(ax)e^{-\int_0^x(\lambda_{2}+\mu_1(s))\,ds}\,dx\right) \nonumber\\
&\quad+\lambda_{1}\lambda_{2}\int_0^{L}\cos(ax) e^{-\int_0^x\mu_2(s)ds}dx\left(\int^{L}_{0}\sin(ax) e^{-\int_0^x(\lambda_{2}+\mu_1(s))\,ds}\,dx\right)
=0. \label{eq_im}
\end{align}
From \eqref{eq_im} we further have
\begin{align}
&\lambda_1\left(1+\lambda_{2}\int_0^{L}\cos(ax) e^{-\int_0^x\mu_2(s)ds}\,dx\right) \left(\int_0^L\sin(ax)e^{-\int_0^x(\lambda_{2}+\mu_1(s))ds}\,dx\right)\nonumber\\
&\quad+\lambda_{2}\left(\int_0^{L}\sin(ax) e^{-\int_0^x\mu_2(s)ds}\,dx\right) 
\left( 1+\lambda_{1} \int^L_0\cos(ax)e^{-\int_0^x(\lambda_{2}+\mu_1(s))ds}\,dx\right) 
=0. \label{2eq_im}
\end{align}
To simply the formulations, we let 
\begin{align*}
&I_1=1+\lambda_{2}\int_0^{L}\cos(ax) e^{-\int_0^x\mu_2(s)ds}\,dx,  \quad
I_2=1+\lambda_1 \int^{L}_{0}\cos(ax) e^{-\int_0^x(\lambda_{2}+\mu_1(s))\,ds}\,d x\\
& I_3=\int_0^{L}\sin(ax) e^{-\int_0^x\mu_2(s)\,ds}\,dx,\quad  \text{and}\quad 
I_4=\int^{L}_{0}\sin(ax) e^{-\int_0^x(\lambda_{2}+\mu_1(s))\,ds}\,d x.
\end{align*}
Using \eqref{2EST_mu} and integration by parts we have
\begin{align}
&I_3%=\frac{1}{a}-\frac{1}{a}\int_0^L\cos(ax) \mu_2(x)e^{-\int_0^x\mu_2(\alpha)\,ds}\,dx\\
=\frac{1}{a} \int_0^L(1-\cos(ax)) \mu_2(x)e^{-\int_0^x\mu_2(\alpha)\,ds}\,dx,\label{1EST_sin}\\
 &I_4
 % =-\int_0^Le^{-\int_0^x(ds\mu_1(s)+\lambda_{2})ds}\,d(\frac{\cos(ax)}{a})\\
%  &\qquad=\frac{1}{a}-\frac{1}{a}\int_0^L\cos(ax) (\mu_1(x)+\lambda_2)e^{-\int_0^x(ds\mu_1(s)+\lambda_{2})ds}\,dx\\
  =\frac{1}{a}\int_0^L(1-\cos(ax)) (\lambda_2+\mu_1(x)) e^{-\int_0^x(\lambda_{2} +\mu_1(s)ds}\,dx.\label{2EST_sin}
\end{align}

Combining \eqref{2eq_re} with \eqref{1EST_sin}--\eqref{2EST_sin} follows  that
\begin{align}
I_1I_2=&\lambda_1\lambda_{2}I_3I_4>0, \label{3eq_re}
\end{align}
for $a\neq0$. However, from \eqref{2eq_im} we have
\begin{align*}
\frac{I_1}{I_2}=-\frac{\lambda_{2}}{\lambda_1}\frac{I_3}{I_4}<0, %\label{3eq_im}
\end{align*}
which contradicts  with \eqref{3eq_re}. Therefore, $\phi(r)\neq0$ for $r=ia$ where $a\neq 0$, and thus $r\in \rho(\mathcal{A})$. This completes the proof.
%\end{proof}
 \hfill $\Box$

%\newlins e 

\end{document}